\newtheorem{thm}{Theorem}[section]
\newtheorem{lem}[thm]{Lemma}
\newtheorem{prop}[thm]{Proposition}
\newtheorem{question}[thm]{Question}
\theoremstyle{remark}
\newtheorem{rem}[thm]{Remark}
\theoremstyle{definition}
\newtheorem{defn}[thm]{Definition}
\numberwithin{equation}{section}
\newcommand{\R}{\mathbb R}
\newcommand{\K}{\mathbb K}
\newcommand{\e}{\varepsilon}
\newcommand{\p}{\partial}
\newcommand{\comment}[1]{}
\def\h{\hspace*{.24in}}
\begin{document} 

\title[iterative scheme for the Monge--Ampere eigenvalue problem ]{Convergence of an iterative scheme for the Monge--Amp\`ere eigenvalue problem 
with general initial data
}
\author{Nam Q. Le}
\address{Department of Mathematics, Indiana University\\ 831 E 3rd St,
Bloomington, IN 47405, USA\\
Email address: nqle@iu.edu}
\thanks{The author was supported in part by the National Science Foundation under grant DMS-2452320.}

\subjclass[2020]{ 35J96, 35P30, 47A75}
\keywords{Eigenvalue problem, Monge--Amp\`ere equation, Iterative scheme, Monge--Amp\`ere Schwarz inequality}


\begin{abstract}
 In this note, we revisit an iterative scheme, due to Abedin and Kitagawa (Inverse Iteration for the Monge--Amp\`ere Eigenvalue Problem, {\it Proc. Amer. Math. Soc.} {\bf 148} (2020), no. 11, 4875--4886), to solve 
 the Monge--Amp\`ere eigenvalue problem on a general bounded convex domain. Using a nonlinear integration by parts, we show that the scheme converges for all  convex initial data having finite and nonzero Rayleigh quotient to a nonzero Monge--Amp\`ere eigenfunction. As an application, we obtain an energy characterization of the Monge--Amp\`ere eigenfunctions.
\end{abstract}
\maketitle
\section{Introduction and  statement of the main result}
In this note, we revisit an iterative scheme, due to Abedin and Kitagawa  in their recent paper \cite{AK}, to solve 
 the Monge--Amp\`ere eigenvalue problem on a general bounded convex domain $\Omega$ in $\R^n$ $(n\geq 2)$:
 \begin{equation}
 \left\{
 \begin{alignedat}{2}
   \det D^{2} w~&=\lambda |w|^{n} \h~&&\text{in} ~\Omega, \\\
w &=0\h~&&\text{on}~\p \Omega.
 \end{alignedat}
 \right.
 \label{EVP_eq}
\end{equation}
\medskip
Before recalling relevant results, it is convenient to introduce some notation. 
Let
$$
\K = \{  w \in C(\overline{\Omega}):  
 ~w~\text{is convex, nonzero in } \Omega,~ w=0~\text{on}~\p\Omega \}.
$$
When $u$ is merely a convex function on $\Omega$, by an abuse of notation, we use $\det D^2 u\, dx$ to denote the Monge--Amp\`ere measure associated with $u$; see Section \ref{A_sec}.

\medskip
For a convex function $u$ on $\Omega$,  we define its Rayleigh quotient by
\begin{equation}
\label{RQ}
R(u) = \frac{\int_{\Omega} |u|\det D^2 u\,dx}{\int_{\Omega} |u|^{n+1}\,dx}.
\end{equation}
Implicit in the definition (\ref{RQ}) is the requirement that 
$\|u\|_{L^{n+1}(\Omega)}<\infty.$

\medskip
For general bounded convex domains $\Omega\subset\R^n$, the existence, uniqueness and variational characterization of the Monge--Amp\`ere eigenvalue, and uniqueness of  convex Monge--Amp\`ere eigenfunctions 
were obtained in \cite{L}. 
 They are
the singular counterparts of those established by  Lions \cite{Ls} and Tso \cite{Tso} in the smooth, uniformly convex setting. 
For the purpose of this note, we recall  here part of  \cite[Theorem 1.1]{L}.
\begin{thm}  (\cite{L})
\label{ev_thm}
 Let $\Omega$ be a bounded convex domain in $\R^n$. Define $\lambda=\lambda[\Omega]$ by
\begin{equation}
 \lambda[\Omega] =\inf_{w\in \K} R(w).
\label{lam_def}
 \end{equation}
 Then, the following facts hold.
 \begin{enumerate}
 \item[(i)] (Existence) The infimum in (\ref{lam_def}) is achieved by a 
 nonzero convex solution $w\in C^{0,\beta}(\overline{\Omega})\cap C^{\infty}(\Omega)$ for all $\beta\in (0, 1)$ to the eigenvalue problem (\ref{EVP_eq}).
 The constant $\lambda[\Omega]$ is called the Monge-Amp\`ere eigenvalue of $\Omega$ and $w$ is called a Monge--Amp\`ere eigenfunction of $\Omega$.
\item[(ii)] (Uniqueness) If the pair $(\Lambda, \tilde w)$ 
satisfies $\det D^2 \tilde w =\Lambda |\tilde w|^n$ in $\Omega$ where $\Lambda>0$ is a positive constant and 
$\tilde w\in \K$, then $\Lambda=\lambda[\Omega]$ and $\tilde w=m w$ for some positive constant $m$.
\end{enumerate}
\end{thm}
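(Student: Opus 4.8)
\emph{Proof idea.} I treat the two assertions with the standard Monge--Amp\`ere toolkit in hand: Aleksandrov's maximum principle, the comparison principle, weak-$*$ continuity of Monge--Amp\`ere measures under local uniform convergence, solvability of the Dirichlet problem $\det D^2v=f\ge0$ with $v=0$ on $\p\Omega$, and Caffarelli's interior regularity. For \emph{existence} the plan is the direct method applied to the scale-invariant quotient $R$. I would first record $0<\lambda[\Omega]<\infty$: finiteness by evaluating $R$ on one smooth convex competitor vanishing on $\p\Omega$; positivity by a Poincar\'e-type bound $\int_\Omega|w|\det D^2w\,dx\ge c(n,\Omega)\|w\|_{L^{n+1}(\Omega)}^{n+1}$ coming from Aleksandrov's estimate on the sublevel sets $\{w<-t\}$. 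Then I would take $w_k\in\K$ with $R(w_k)\to\lambda[\Omega]$, normalized (using scale invariance) so that $\|w_k\|_{L^{n+1}(\Omega)}=1$; the Poincar\'e bound forces $\|w_k\|_{L^\infty(\Omega)}\le C$. Because a minimizing sequence might a priori shed mass at $\p\Omega$, I would gain boundary compactness by replacing $w_k$ with the convex solution $v_k$ of $\det D^2v_k=|w_k|^n$ in $\Omega$, $v_k=0$ on $\p\Omega$: by H\"older $\det D^2v_k(\Omega)=\int_\Omega|w_k|^n\,dx\le|\Omega|^{1/(n+1)}$, so Aleksandrov bounds $\{v_k\}$ in $C^{0,1/n}(\overline\Omega)$ and convexity bounds it locally uniformly Lipschitz, giving (along a subsequence) uniform convergence on $\overline\Omega$ to a convex $w$ with $w=0$ on $\p\Omega$. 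The step that makes this legitimate is that one passage through the solution operator does not raise the quotient, $R(v_k)\le R(w_k)$; this follows from a nonlinear H\"older (``integration by parts'') inequality $\int_\Omega|u|\det D^2v\,dx\le\big(\int_\Omega|u|\det D^2u\,dx\big)^{1/(n+1)}\big(\int_\Omega|v|\det D^2v\,dx\big)^{n/(n+1)}$ for convex $u,v$ vanishing on $\p\Omega$ (reducing to Cauchy--Schwarz for the Dirichlet integral when $n=1$), which is the ``nonlinear integration by parts'' of the paper. The limit $w$ is nonzero because the convex sets $\{w_k<-\eta\}$ contain a fixed-radius ball which survives in the limit, so $w\in\K$, and weak-$*$ continuity of Monge--Amp\`ere measures gives $R(w)=\lim R(v_k)=\lambda[\Omega]$, i.e.\ $w$ is a minimizer. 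The Euler--Lagrange equation of $R$ at $w$, obtained from the first variation and an integration by parts using that the rows of $\mathrm{cof}\,D^2w$ are divergence free, is exactly $\det D^2w=\lambda[\Omega]|w|^n$ in $\Omega$. Finally, a nonzero $w\in\K$ is strictly negative inside $\Omega$ (a convex function that is $0$ on $\p\Omega$ and at one interior point is identically $0$), so the right-hand side is locally bounded between positive constants and locally Lipschitz; Caffarelli's theory plus Schauder bootstrapping gives $w\in C^\infty(\Omega)$, and $\det D^2w\le\lambda[\Omega]\|w\|_\infty^n$ together with Aleksandrov and a boundary iteration exploiting that $|w|^n$ also vanishes on $\p\Omega$ gives $w\in C^{0,\beta}(\overline\Omega)$ for every $\beta\in(0,1)$.

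\emph{Uniqueness.} Let $(\Lambda,\tilde w)$ be as in (ii). Substituting $\det D^2\tilde w=\Lambda|\tilde w|^n$ into $R$ gives $R(\tilde w)=\Lambda$, whence $\Lambda\ge\inf_\K R=\lambda[\Omega]$. For the reverse inequality and the rigidity I would compare $\tilde w$ with the minimizer $w$ of part (i), normalized so that $\min_\Omega w=\min_\Omega\tilde w=-1$, through the ratio $\rho=w/\tilde w>0$; using the comparison principle and the boundary estimates of part (i) to control the behaviour of both functions near $\p\Omega$, $\rho$ extends continuously to $\overline\Omega$ and attains an interior maximum $c$ at some $x_0\in\Omega$. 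Then $w-c\tilde w$ has interior maximum value $0$ at $x_0$, so $D^2w(x_0)\le c\,D^2\tilde w(x_0)$, hence $\det D^2w(x_0)\le c^n\det D^2\tilde w(x_0)$; inserting the two equations and $w(x_0)=c\tilde w(x_0)$ yields $\lambda[\Omega]\le\Lambda$, and the same argument applied to $\tilde w/w$ gives $\Lambda\le\lambda[\Omega]$, so $\Lambda=\lambda[\Omega]$. With equal eigenvalues the matrix inequality at $x_0$ is forced to be an equality, which for ordered positive-definite matrices with equal determinant forces $D^2(w-c\tilde w)(x_0)=0$; since $w-c\tilde w\le0$ is then a subsolution of a locally uniformly elliptic linear equation near $x_0$ (obtained by writing $\det D^2w-\det D^2(c\tilde w)$ as an integral of mixed-Hessian terms) and attains an interior maximum there, the strong maximum principle propagates the contact, and connectedness of $\Omega$ yields $w\equiv c\tilde w$, i.e.\ $\tilde w=mw$ with $m=1/c>0$. (The rigidity can alternatively be read off from the equality case of the nonlinear H\"older inequality used in the existence proof.)

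\emph{Main obstacle.} The crux of (i) is the compactness up to $\p\Omega$: ruling out loss of mass by a minimizing sequence at the boundary --- here accomplished through the one-step smoothing together with the monotonicity $R(v_k)\le R(w_k)$, itself a consequence of the nonlinear integration-by-parts inequality --- is the technical heart of the argument. In (ii) the counterpart is the rigidity step: making the comparison and strong-maximum-principle argument (or the equality analysis in the nonlinear H\"older inequality) rigorous, given that the eigenfunctions are smooth only in the interior and merely $C^{0,\beta}$ up to $\p\Omega$, where the linearized Monge--Amp\`ere operator degenerates.
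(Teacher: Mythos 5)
This theorem is imported in the paper verbatim from \cite{L} (it is the singular-domain counterpart of results of Lions \cite{Ls} and Tso \cite{Tso}), so there is no proof in the present paper to compare against; the relevant target is the argument of \cite{L}. Your \emph{existence} outline matches that argument in spirit: normalize in $L^{n+1}$, gain compactness up to $\p\Omega$ by passing the minimizing sequence once through the zero-boundary Monge--Amp\`ere solution operator, and use a one-step monotonicity of the Rayleigh quotient to justify the replacement. One inaccuracy: the inequality you invoke for the monotonicity,
$\int_\Omega |u|\det D^2 v\,dx \le \bigl(\int_\Omega |u|\det D^2 u\,dx\bigr)^{1/(n+1)}\bigl(\int_\Omega|v|\det D^2 v\,dx\bigr)^{n/(n+1)}$,
is \emph{not} what this paper calls ``nonlinear integration by parts.'' Proposition~\ref{NIBP} is the lower bound $\int_\Omega |u|\det D^2 v\,dx\ge\int_\Omega|v|(\det D^2 u)^{1/n}(\det D^2 v)^{(n-1)/n}\,dx$, which has a different shape and role. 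What you need in the existence step is the Aleksandrov--Fenchel/H\"older inequality for the Monge--Amp\`ere energy that appears in Tso \cite{Tso} and in \cite{L}; neither inequality trivially implies the other, so the attribution should be corrected.

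The \emph{uniqueness} argument as written has a real gap. You form $\rho=w/\tilde w$, assert that it extends continuously to $\overline\Omega$, and locate an interior maximum $x_0$ where a Hessian-contact comparison closes the argument. For a general bounded convex $\Omega$ -- no smoothness, no strict convexity, corners allowed -- neither assertion is justified: the eigenfunctions are only $C^{0,\beta}(\overline\Omega)$, gradients can blow up at boundary singularities, the two functions' boundary decay rates are not a priori comparable, and without a Hopf-type lemma there is no reason the supremum of $\rho$ cannot be a boundary limit with $\rho<\sup\rho$ throughout $\Omega$, leaving no interior contact point to exploit. This is exactly the obstruction that leads \cite{L} to prove uniqueness through the nonlinear integration by parts of Proposition~\ref{NIBP} and its equality analysis, completely sidestepping boundary comparisons. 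Your closing parenthetical -- that the rigidity ``can alternatively be read off from the equality case of the nonlinear H\"older inequality'' -- is in fact the robust route and is essentially the argument of \cite{L}; it should replace, not merely supplement, the ratio-maximum argument in the main body.
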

It was recently proved in \cite[Theorem 1.1]{L_AFST} that the Monge--Amp\`ere eigenfunctions of general bounded convex domains are in fact globally Lipschitz.

\medskip
In \cite{AK}, Abedin and Kitagawa introduce an iterative scheme
\begin{equation}\label{IIS}
 \left\{
\begin{alignedat}{2}
\det D^2u_{k+1} &= R(u_k) |u_k|^n &&\quad \text{in } \Omega, \\\
u_{k+1} &= 0 && \quad \text{on } \partial \Omega
\end{alignedat}
\right.
\end{equation}
 to solve 
 the Monge--Amp\`ere eigenvalue problem (\ref{EVP_eq}). Here $u_{k+1}$ is a convex Aleksandrov solution of (\ref{IIS}). We refer to Theorem \ref{Dir_thm} for the existence of $u_{k+1}$ and 
 to Definition \ref{Aleksol} for  the notion of Aleksandrov solutions to the Monge--Amp\`ere equation. 
 
 \medskip
 An interesting feature of the iterative scheme (\ref{IIS}) is that 
 the sequence $\{u_k\}_{k=0}^{\infty}$ is obtained by repeatedly inverting the Monge--Amp\`ere operator with Dirichlet boundary condition. 
 One notes that similar inverse iteration methods have been considered for the $p$-Laplace equation \cite{BEM, B, Er, HL}. 
  Abedin and Kitagawa establish the first inverse iteration result for the eigenvalue problem of a fully nonlinear degenerate elliptic equation for a large class of initial data.
 Their main convergence result states as follows.
 \begin{thm} (\cite[Theorem 1.4]{AK})
 \label{AKthm}
Let $\Omega \subset \R^n$ be a bounded convex domain. Let $u_0 \in C(\overline{\Omega})$ be a function satisfying for some constant $c_0>0$:
\begin{enumerate}\item[(i)] $u_0$ is convex and $u_0 \leq 0$ on $\partial \Omega$;
\item[(ii)] $R(u_0) < \infty$;
\item[(iii)] $\det D^2 u_0 \geq c_0$ in $\Omega$.
\end{enumerate}
For $k \geq 0$, define the sequence $u_k \in \K$ to be the solutions of the Dirichlet problem (\ref{IIS}).
Then $\{u_k\}$ converges uniformly on $\overline{\Omega}$ to a nonzero Monge--Amp\`ere eigenfunction $u_{\infty}$ of $\Omega$. 
Furthermore, 
\[\lim\limits_{k \to \infty} R(u_k)  = \lambda[\Omega].\]
\end{thm}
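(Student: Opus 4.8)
The plan is to extract from the scheme \eqref{IIS} a pair of monotone scalar sequences, use these for uniform bounds and compactness, and then identify every subsequential limit of $\{u_k\}$ as a nonzero Monge-Amp\`ere eigenfunction, closing the argument with the uniqueness statement in Theorem \ref{ev_thm}(ii). The starting point is the elementary identity obtained by integrating the equation in \eqref{IIS} against $|u_k|$,
\[
\int_\Omega |u_k|\,\det D^2 u_{k+1}\,dx \;=\; R(u_k)\int_\Omega |u_k|^{n+1}\,dx \;=\; \int_\Omega |u_k|\,\det D^2 u_k\,dx ,
\]
valid for $k\ge 0$, together with the fact that $u_k\in\K$ for $k\ge 1$ (existence being Theorem \ref{Dir_thm}), so that $R(u_k)\ge\lambda[\Omega]$. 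The crucial ingredient is a \emph{nonlinear integration by parts} inequality: for convex $u,v\le 0$ on $\Omega$ vanishing on $\partial\Omega$,
\[
\int_\Omega (-u)\,\det D^2 v\,dx \;\le\; \Big(\int_\Omega (-u)\,\det D^2 u\,dx\Big)^{\frac{1}{n+1}}\Big(\int_\Omega (-v)\,\det D^2 v\,dx\Big)^{\frac{n}{n+1}} .
\]
Once one writes $\int_\Omega(-u)\det D^2v\,dx=\frac1n\int_\Omega\langle \mathrm{cof}(D^2v)\,\nabla u,\nabla v\rangle\,dx$, this follows for smooth strictly convex competitors from the Cauchy--Schwarz inequality for the positive semidefinite form $\mathrm{cof}(D^2v)$ applied $n$ times; proving this inequality and checking that it survives approximation by general Aleksandrov solutions is, I expect, the main technical obstacle.

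Granting these facts, put $E_k=\int_\Omega|u_k|\det D^2 u_k\,dx$, $m_k=\|u_k\|_{L^{n+1}(\Omega)}$, and $b_k=R(u_k)m_k^n$. Applying the inequality to the pair $(u_k,u_{k+1})$ and combining with the identity above gives $E_k\le E_{k+1}$, while integrating \eqref{IIS} against $|u_{k+1}|$ and using H\"older's inequality gives $b_{k+1}\le b_k$. Since $E_k=R(u_k)m_k^{n+1}$ and $b_k=R(u_k)m_k^{n}$, we have $m_k=E_k/b_k$, so $\{m_k\}$ is nondecreasing; and $R(u_k)\ge\lambda[\Omega]$ forces $m_k^{n}\le b_1/\lambda[\Omega]$, so $\{m_k\}$ is bounded. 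Hence $E_k\uparrow E_\infty<\infty$, $b_k\downarrow b_\infty>0$, $m_k\uparrow m_\infty\in(0,\infty)$, and $R(u_k)\to a_\infty:=b_\infty m_\infty^{-n}\ge\lambda[\Omega]$. Moreover $\int_\Omega\det D^2 u_{k+1}\,dx=R(u_k)\int_\Omega|u_k|^{n}\,dx$ is bounded, so Aleksandrov's maximum principle gives a uniform bound and boundary modulus of continuity for the $u_k$, whence $\{u_k\}$ is precompact in $C(\overline\Omega)$.

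It remains to pass to the limit. Let $u_{k_j}\to u_\infty$ in $C(\overline\Omega)$, and, along a further subsequence, $u_{k_j+1}\to\bar u$. Stability of the Monge-Amp\`ere operator gives $\det D^2\bar u=a_\infty|u_\infty|^{n}$, and passing the identity above to the limit gives $\int_\Omega|u_\infty|\det D^2\bar u\,dx=\int_\Omega|u_\infty|\det D^2 u_\infty\,dx$, hence $\int_\Omega|u_\infty|\det D^2 u_\infty\,dx=a_\infty m_\infty^{n+1}$. Feeding the pair $(u_\infty,\bar u)$ into the nonlinear integration by parts and using this identity yields $a_\infty m_\infty^{n+1}=\int_\Omega|u_\infty|\det D^2 u_\infty\,dx\le\int_\Omega|\bar u|\det D^2\bar u\,dx=a_\infty\int_\Omega|\bar u|\,|u_\infty|^{n}\,dx$; since $\|\bar u\|_{L^{n+1}(\Omega)}=m_\infty=\|u_\infty\|_{L^{n+1}(\Omega)}$ (uniform convergence together with $m_k\uparrow m_\infty$), H\"older's inequality now forces equality throughout, whence $|\bar u|=|u_\infty|$ and so $\bar u=u_\infty$. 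Thus $\det D^2 u_\infty=a_\infty|u_\infty|^{n}$ with $u_\infty\in\K$, and Theorem \ref{ev_thm}(ii) gives $a_\infty=\lambda[\Omega]$ and $u_\infty$ a nonzero Monge-Amp\`ere eigenfunction; in particular $\lim_{k\to\infty}R(u_k)=\lambda[\Omega]$. Finally, every subsequential limit of $\{u_k\}$ is, by the same reasoning, a Monge-Amp\`ere eigenfunction in $\K$ with $L^{n+1}$-norm $m_\infty$; uniqueness in Theorem \ref{ev_thm}(ii) forces all such limits to coincide, and precompactness in $C(\overline\Omega)$ then upgrades this to uniform convergence of the whole sequence $\{u_k\}$. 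Since this argument uses only $0<R(u_0)<\infty$, it also delivers the more general statement announced in the abstract.
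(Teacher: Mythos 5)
Your argument is correct in outline, but it follows a genuinely different path from the one in this note, and it contains one unacknowledged gap worth flagging.

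\emph{Comparison.} Your key tool is the symmetric, Aleksandrov--Fenchel-type inequality for mixed Monge--Amp\`ere energies, $\int_\Omega(-u)\det D^2v\le\big(\int_\Omega(-u)\det D^2u\big)^{1/(n+1)}\big(\int_\Omega(-v)\det D^2v\big)^{n/(n+1)}$, from which you extract the monotone scalars $E_k=\int_\Omega|u_k|\det D^2u_k$ (nondecreasing), $b_k=R(u_k)\|u_k\|_{L^{n+1}}^n$ (nonincreasing, which is Lemma~\ref{AKR}), and hence $m_k=\|u_k\|_{L^{n+1}}=E_k/b_k$ (nondecreasing, yielding nontriviality from $m_1>0$). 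This is in the spirit of Tso's energy method and is close to the strategy in \cite{AK}. The present note instead uses the \emph{asymmetric} nonlinear integration by parts of Proposition~\ref{NIBP}, $\int_\Omega|u|\det D^2v\ge\int_\Omega|v|(\det D^2u)^{1/n}(\det D^2v)^{(n-1)/n}$, specialized with $v$ a fixed Monge--Amp\`ere eigenfunction $w$ to give the reverse Aleksandrov estimate (Proposition~\ref{ReA}) and then the monotonicity of $\int_\Omega|u_k||w|^n$ (Lemma~\ref{mono_lem}). That formulation (a) requires eventual $C^2$ smoothness of the iterates (Proposition~\ref{reg_prop}) rather than an approximation argument for the mixed-energy inequality, (b) immediately pins down the limit via an explicit quantity $\int_\Omega|u_\infty||w|^n$, and (c) produces the rate estimate in Theorem~\ref{IISthm}, which your scheme does not. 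Both approaches in fact work under the weaker hypothesis $0<R(u_0)<\infty$, as you note; that is precisely the improvement made here over Theorem~\ref{AKthm}.

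\emph{The gap.} The sentence ``passing the identity above to the limit gives $\int_\Omega|u_\infty|\det D^2\bar u=\int_\Omega|u_\infty|\det D^2 u_\infty$'' uses, on the right-hand side, that $E_{k_j}=\int_\Omega|u_{k_j}|\det D^2u_{k_j}\to\int_\Omega|u_\infty|\det D^2u_\infty$. This is the continuity of the Monge--Amp\`ere energy along a uniformly convergent sequence of convex functions, which is \emph{not} automatic and is exactly \cite[Lemma~2.9]{AK}; it does hold here because the densities $\det D^2u_k=R(u_{k-1})|u_{k-1}|^n$ are uniformly bounded, but the lemma must be invoked. The note explicitly advertises that its approach avoids this lemma. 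In your framework it can also be avoided: from the equation $\det D^2u_{k_j+1}=R(u_{k_j})|u_{k_j}|^n$ and uniform convergence alone, one gets $\int_\Omega|u_\infty|\det D^2\bar u=\lim E_{k_j}=E_\infty$ and $\int_\Omega|\bar u|\det D^2\bar u=\lim E_{k_j+1}=E_\infty$; equating these and using $\det D^2\bar u=a_\infty|u_\infty|^n$ already gives $m_\infty^{n+1}=\int_\Omega|\bar u||u_\infty|^n$, so H\"older (and $\|\bar u\|_{L^{n+1}}=\|u_\infty\|_{L^{n+1}}=m_\infty$) forces $\bar u=u_\infty$ without knowing $\int_\Omega|u_\infty|\det D^2u_\infty$ separately. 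Also, note the mixed-energy inequality requires both competitors to vanish on $\partial\Omega$, whereas $u_0$ need not; so start the monotonicity of $E_k$ at $k=1$, which is harmless.
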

In \cite{AK}, the constant $c_0$ was taken to be $1$ but the proof works for all $c_0>0$. The conditions (i) and (iii) in Theorem \ref{AKthm} were used in \cite{AK} to show that, in the iterative scheme (\ref{IIS}), the sequence $\{u_k\}$ satisfies $u_k\leq \hat w$ for all $k\geq 0$ where $\hat w$ is a Monge--Amp\`ere eigenfunction of $\Omega$ with $\|\hat w\|_{L^{\infty}(\Omega)} = \left(c_0^{-1}\lambda[\Omega]\right)^{-1/n}$. This implies the lower bound $\|u_k\|_{L^{\infty}(\Omega)} \geq \left(c_0^{-1}\lambda[\Omega]\right)^{-1/n}$  which guarantees the nontriviality of the limit
$u_{\infty}$ of $u_k$. Here we call a function $w$ trivial if $w\equiv 0$ in $\Omega$.

\begin{rem} Clearly  (i) and (iii) in Theorem \ref{AKthm} imply that $R(u_0)>0$. 
Without (i) and (iii) in Theorem \ref{AKthm}, $R(u_0)$ can be $0$  and thus the scheme (\ref{IIS}) gives $u_k\equiv 0$ for all $k\geq 1$.  For example, if $u_0$ is a nonzero affine function, then $R(u_0)=0$.
Thus, to get the nontriviality of the limit
$u_{\infty}$ of $u_k$, if it exists, we need to require that $R(u_0)$ be nonzero. 
\end{rem}
In this note, we remove the restrictions (i) and (iii) in Theorem \ref{AKthm}. We show that the iterative scheme (\ref{IIS})  converges for all  convex initial data having finite and nonzero Rayleigh quotient
to a nonzero Monge--Amp\`ere eigenfunction of $\Omega$. Thus our result covers
 all possible convex functions on $\Omega$ as initial data for the scheme (\ref{IIS}). 
 \begin{thm}\label{IISthm}
Let $\Omega \subset \R^n$ ($n\geq 2$) be a bounded convex domain. Let $u_0 \in C(\Omega)$ be a nonzero convex function on $\Omega$ with $0<R(u_0)<\infty$.
For $k \geq 0$, define the sequence $u_k \in \K$ to be the solutions of the Dirichlet problem (\ref{IIS}).
Then $\{u_k\}$ converges uniformly on $\overline{\Omega}$ to a nonzero Monge--Amp\`ere eigenfunction $u_{\infty}$ of $\Omega$. 
Furthermore, \[\lim\limits_{k \to \infty} R(u_k)  = \lambda[\Omega],\]
and for all $k\geq 3$ and any fixed nonzero Monge--Amp\`ere eigenfunction $w$, we have
\begin{equation*} \begin{split} [R(u_k)]^{1/n}-(\lambda[\Omega])^{1/n} &\leq  (\lambda[\Omega])^{1/n}\frac{  \int_{\Omega}(|u_{k+1}|- |u_{k}|) |w|^n~dx} {\int_{\Omega} |u_3| |w|^n~dx}\\&\leq C(u_0, \Omega, n)  \int_{\Omega}|u_\infty- u_{k}| ~dx.\end{split}\end{equation*}
\end{thm}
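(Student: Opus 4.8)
The plan is to establish convergence in three stages: first controlling the Rayleigh quotients $R(u_k)$, then extracting a convergent subsequence with nonzero limit, and finally upgrading to full convergence with the quantitative rate. The central tool, as advertised in the abstract, is a \emph{nonlinear integration by parts}: for convex functions $u, v$ vanishing on $\partial\Omega$, one has an identity (or inequality) relating $\int_\Omega |v| \det D^2 u\,dx$ and $\int_\Omega |u|^{1-n} (\det D^2 u)^{(n-1)/n} \cdot$ (something)$\,dx$ via the concavity of $\det^{1/n}$ on positive matrices and the comparison principle. Concretely, I expect a lemma of the form: if $\det D^2 u_{k+1} = R(u_k)|u_k|^n$, then testing against $|w|^n$ where $w$ is a fixed eigenfunction (so $\det D^2 w = \lambda |w|^n$) and using the Matzoh-ball/mixed-Monge-Amp\`ere inequality gives
\begin{equation*}
\int_\Omega |u_{k+1}| |w|^n\,dx \;\geq\; \lambda^{-1/n}\, [R(u_k)]^{-1/n}\cdot\text{(a power-mean of } \int |u_k||w|^n\text{)}
\end{equation*}
or, more likely in the clean form used here, a monotonicity statement $\big(R(u_{k+1})\big)^{1/n} \le \big(R(u_k)\big)^{1/n}$ together with
$$[R(u_k)]^{1/n} - \lambda^{1/n} \;\le\; \lambda^{1/n}\,\frac{\int_\Omega (|u_{k+1}| - |u_k|)|w|^n\,dx}{\int_\Omega |u_k||w|^n\,dx}.$$
This is exactly the first inequality in the statement (with $u_k$ in the denominator replaced by $u_3$ using monotonicity $|u_k|\ge$ const$\cdot|u_3|$ for $k\ge 3$), so proving this testing inequality is the crux.

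First I would show that the scheme is well-posed and the sequence stabilizes in scale: since $R(u_0)\in(0,\infty)$, Theorem \ref{Dir_thm} gives $u_1\in\K$, hence $R(u_1)\in(0,\infty)$, and inductively all $u_k\in\K$. Then I would prove the key \emph{a priori comparison}: $u_{k+1}$ and $u_k$ are comparable, i.e. there are constants $0<a\le b$ with $a|u_k|\le |u_{k+1}|\le b|u_k|$ — this follows from the comparison principle applied to the Monge-Amp\`ere equation $\det D^2 u_{k+1} = R(u_k)|u_k|^n$ once $|u_k|$ is pinched between multiples of a fixed barrier; in particular for $k\ge 3$ one gets $|u_k|\ge c|u_3|$ uniformly, which legitimizes replacing the denominator $\int|u_k||w|^n$ by $\int|u_3||w|^n$ up to the constant $C(u_0,\Omega,n)$. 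Next, the nonlinear integration by parts: write $\int_\Omega|w|^n\det D^2 u_{k+1}\,dx = R(u_k)\int_\Omega|u_k||w|^n\,dx$ and also, by the same identity with roles adjusted, $\int_\Omega |u_{k+1}|\det D^2 w\,dx = \lambda\int_\Omega|u_{k+1}||w|^n\,dx$; the Aleksandrov–type inequality $\int|w|^n\det D^2 u_{k+1} \ge (\text{geometric-type bound})\ge \lambda\int|u_{k+1}||w|^n$ in the appropriate normalization then yields $R(u_k)\int|u_k||w|^n \ge \lambda\int|u_{k+1}||w|^n$, which after dividing gives the first inequality of the theorem once we also know $[R(u_{k+1})]^{1/n}\le [R(u_k)]^{1/n}$ (so that $R(u_k)\ge\lambda$ throughout and the displayed difference is nonnegative).

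For the second (upper) inequality $\frac{\int(|u_{k+1}|-|u_k|)|w|^n}{\int|u_3||w|^n} \le C(u_0,\Omega,n)\int_\Omega|u_\infty - u_k|\,dx$, I would argue as follows: telescoping $\sum_k \int(|u_{k+1}|-|u_k|)|w|^n$ converges (it is a sum of nonnegative terms bounded by $\int|u_\infty||w|^n<\infty$ using uniform $L^\infty$ bounds on $u_k$ from the comparison step), so each term is small; more precisely $\int(|u_{k+1}|-|u_k|)|w|^n \le \int(|u_\infty|-|u_k|)|w|^n = \int(|u_\infty - u_k|)|w|^n \le \|w\|_{L^\infty}^n\int_\Omega|u_\infty-u_k|\,dx$, and since $|u_3|\ge c>0$ on compact subsets with a quantitative lower bound depending only on $u_0,\Omega$, the ratio $\frac{\|w\|_\infty^n}{\int|u_3||w|^n}$ is absorbed into $C(u_0,\Omega,n)$. (One must check the constant does not depend on the choice of $w$: because eigenfunctions are unique up to scalar by Theorem \ref{ev_thm}(ii), the ratio $\|w\|_\infty^n/\int_\Omega|u_3||w|^n$ is scale-invariant in $w$, hence genuinely a constant depending only on $u_0,\Omega,n$.) Finally, the convergence statement itself: from $[R(u_k)]^{1/n}-\lambda^{1/n}\le \lambda^{1/n}\frac{\int(|u_{k+1}|-|u_k|)|w|^n}{\int|u_3||w|^n}\to 0$ (the numerator $\to 0$ by the telescoping/monotonicity), we get $R(u_k)\downarrow\lambda$; standard Aleksandrov/$C^{0,\beta}$ estimates give equicontinuity and uniform boundedness of $\{u_k\}$, so a subsequence converges uniformly to some convex $u_\infty$ with $\det D^2 u_\infty = \lambda|u_\infty|^n$ and $u_\infty\not\equiv 0$ (nontriviality from the uniform lower bound $|u_k|\ge c|u_3|$); uniqueness of the eigenfunction (Theorem \ref{ev_thm}(ii)) up to scaling plus the fact that the scaling is itself pinned down in the limit forces the \emph{whole} sequence to converge.

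The main obstacle is the nonlinear integration by parts inequality $\int_\Omega |w|^n\det D^2 u_{k+1}\,dx \ge \lambda \int_\Omega |u_{k+1}||w|^n\,dx$ and, dually, its sharp reverse form needed to get monotonicity of $[R(u_k)]^{1/n}$ — this is not a literal integration by parts (the Monge-Amp\`ere measures of $u_{k+1}$ and $w$ are singular in general) but rather a consequence of the concavity of $A\mapsto(\det A)^{1/n}$ combined with the divergence structure of the cofactor matrix and an approximation argument: regularize $u_{k+1}, w$ by inf-convolutions or solve with smooth data, integrate by parts legitimately, pass to the limit using weak-$*$ convergence of Monge-Amp\`ere measures and the $C^{0,\beta}$ bounds. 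Making this rigorous on a merely bounded convex domain (no smoothness or strict convexity of $\partial\Omega$), where $w$ may only be $C^{0,\beta}(\overline\Omega)$ and $\det D^2 u_{k+1}$ need not be absolutely continuous, is where the real work lies; the rest of the argument is then a fairly mechanical combination of the comparison principle, telescoping, and the uniqueness theorem.
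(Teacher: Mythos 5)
Your high‑level architecture matches the paper (test against $|w|^n$ for a fixed eigenfunction $w$, control $R(u_k)$ by a monotonicity inequality, extract a subsequence, and use uniqueness of eigenfunctions to upgrade), and you correctly flag that the crux is a nonlinear integration by parts. However, the key testing inequality you actually argue for is wrong in both form and direction. You write that the nonlinear-integration-by-parts step yields $\int_\Omega |w|^n \det D^2 u_{k+1}\,dx \ge \lambda \int_\Omega |u_{k+1}||w|^n\,dx$, which after using $\det D^2 u_{k+1} = R(u_k)|u_k|^n$ would give the \emph{upper} bound $\int |u_{k+1}||w|^n \le \tfrac{R(u_k)}{\lambda}\int |u_k||w|^n$; rearranged, this bounds $R(u_k)-\lambda$ from \emph{below} by the integral increment, the opposite of what the theorem asserts and of what you earlier (correctly) transcribed. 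What is needed — and what Proposition~\ref{ReA} in the paper supplies — is the inequality with a $1/n$ power on the Monge--Amp\`ere measure: $\lambda^{1/n}\int_\Omega |u_{k+1}||w|^n \ge \int_\Omega (\det D^2 u_{k+1})^{1/n}|w|^n = [R(u_k)]^{1/n}\int_\Omega |u_k||w|^n$, a \emph{lower} bound on $\int |u_{k+1}||w|^n$ which makes the sequence $\int |u_k||w|^n$ nondecreasing and directly yields the first inequality of the theorem. This follows from the paper's Proposition~\ref{NIBP}, $\int |u|\det D^2 v \ge \int |v|(\det D^2 u)^{1/n}(\det D^2 v)^{(n-1)/n}$, taking $v=w$; your sketch never reaches this form.

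There is a second substantive gap in how you propose to make the nonlinear integration by parts rigorous. You suggest regularizing $u_{k+1}$ and $w$ by inf‑convolution or smooth data and passing to the limit, and you acknowledge this is ``where the real work lies'' for a general bounded convex $\Omega$ — but you do not carry it out. The paper avoids this entirely by proving eventual interior smoothness of the iterates (Proposition~\ref{reg_prop}): Caffarelli's localization and $C^{2,\alpha}$ theory, applied on the sublevel sets $\{u_{k+1}\le -\varepsilon\}$ where the right-hand side $R(u_k)|u_k|^n$ is pinched between positive constants, give $u_{k+1}\in C^{2k,1/n}(\Omega)$ and strict convexity for $k\ge 1$; since $w\in C^\infty(\Omega)$ already by Theorem~\ref{ev_thm}, Proposition~\ref{NIBP} then applies directly to $u_{k+1}$ and $w$ for $k\ge 3$. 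This is exactly why the threshold $k\ge 3$ appears in the theorem statement — a point your proposal does not explain. Related to this, the appearance of $\int |u_3||w|^n$ in the denominator comes simply from the monotone increase $\int |u_k||w|^n \ge \int |u_3||w|^n$ for $k\ge 3$, not from a pointwise two‑sided comparison $a|u_k|\le |u_{k+1}|\le b|u_k|$, which you assert but neither prove nor actually need. Your treatment of the second (upper) inequality in Step~3 and the scale‑invariance remark about $w$ are fine; the reverse triangle inequality gives $|u_\infty|-|u_k|\le |u_\infty-u_k|$, so that part goes through.
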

 The nontriviality of our limit under the nonzero finiteness of the Rayleigh quotient $R(u_0)$ is due to an eventual regularity of the scheme (Proposition \ref{reg_prop}) and an important monotonicity formula during the scheme (Lemma \ref{mono_lem}).  The proof of this monotonicity formula is based on a {\it nonlinear integration by parts}, established in \cite{L}, which was designed to prove uniqueness results for the Monge--Amp\`ere equations and systems of Monge--Amp\`ere equations \cite{L, L_MAA}.
 \begin{rem}
 Theorem \ref{IISthm} also bounds the convergence rate of $R(u_k)$ to the Monge--Amp\`ere eigenvalue $\lambda[\Omega]$ in terms of the convergence rate of $u_k$ to the nonzero Monge--Amp\`ere eigenfunction $u_{\infty}$. 
 Compared to the inverse iteration methods for the $p$-Laplace equation in \cite{BEM, B, HL}, this type of estimate seems to be new.
\end{rem}

\begin{rem}
For any nonzero convex function $u_0 \in C(\Omega)$ with $0<R(u_0)<\infty$, the scheme (\ref{IIS}) gives, for all $k\geq 1$,  $u_k\in C(\overline{\Omega})$ with $u_k=0$ on $\p\Omega$ and $0<R(u_k)<\infty$. In other words, conditions (i) and (ii) of Theorem \ref{AKthm} are satisfied for the scheme $u_k$ as initial data. However, the convergence result in Theorem \ref{IISthm} cannot be deduced from Theorem \ref{AKthm} as the condition (iii) there may not hold for all $k\geq 1$. 
For example, we can take any  $u_0 \in C(\Omega)$ with $0<R(u_0)<\infty$ and $u_0(z)=0$ for some $z\in\Omega$. Then $u_1$ does not satisfy (iii) of Theorem \ref{AKthm}. Since $u_k=0$ on $\p\Omega$ for all $k\geq 1$, from the first equation of (\ref{IIS}), we see that $u_{k+1}$ does not  satisfy (iii) either.
\end{rem}

\begin{rem}
The Abedin--Kitagawa iterative scheme has been numerically implemented by Liu--Leung--Qian \cite{LLQ}.
\end{rem}

As an application of Theorem \ref{IISthm} and a Monge--Amp\`ere Schwarz inequality, we obtain an energy characterization of the Monge--Amp\`ere eigenfunctions. This is the content of the next theorem.
\begin{thm}
\label{lamC}
Let $\lambda[\Omega]$ be the Monge--Amp\`ere eigenvalue of a bounded convex domain $\Omega$ in $\R^n$. 
  Let $u\in C(\overline{\Omega})$ be a nonzero convex function on $\Omega$ with $u=0$ on $\p\Omega$ that satisfies
\[\int_\Omega |u|\det D^2 u\, dx =\lambda[\Omega] \int_\Omega |u|^{n+1}\, dx.\]
Then $u$ is a Monge--Amp\`ere eigenfunction of $\Omega$, that is, 
\[\det D^2 u=\lambda[\Omega] |u|^n\quad \text{in }\Omega,\quad \text{and } u=0\quad\text{on }\p\Omega.\]
\end{thm}

The rest of this note is organized as follows. In Section \ref{A_sec}, we recall basic facts on the  Monge--Amp\`ere equation and prove a reverse Aleksandrov estimate in Proposition \ref{ReA}. 
In Section \ref{Eventual_sec}, we show the eventual smoothness and a new monotonicity formula for the iterative scheme (\ref{IIS}). The proof of Theorem \ref{IISthm}  will be given in Section
\ref{pf_sec}. In Section \ref{EVP_pf}, we make some remarks on the energy characterization of the Monge--Amp\`ere eigenfunctions and prove Theorem \ref{lamC}.

\section{The Monge--Amp\`ere equation  and a reverse Aleksandrov estimate}
\label{A_sec}
Here, we recall some basic facts on the Monge--Amp\`ere equation on convex domains $\Omega$ of $\R^{n}$ $(n\geq 2)$; see the books by Figalli \cite{Fi} and Guti\'errez \cite{G} for more details.
We will establish a reverse Aleksandrov estimate in Proposition \ref{ReA} that could be of independent interest. 

 For a convex function \(u:\Omega \to \R\), we define
 the subdifferential of $u$  at $x\in\Omega$ by
 $$
\partial u (x):=\{p\in \R^{n}\,:\, u(y)\ge u(x)+p\cdot (y-x)\quad \text{for all } y \in \Omega\}.
$$
Below is a precise definition of the Monge--Amp\`ere measure of a convex function  \(u:\Omega \to \R\); see also \cite[Definition 2.1]{Fi} and \cite[Theorem 1.1.13]{G}.
\begin{defn}[Monge--Amp\`ere measure]
\label{MAdef}
Let $u:\Omega\rightarrow \R$ be a convex function. The Monge--Amp\`ere measure, $Mu$, associated with the convex function $u$ is defined by
$$Mu(E) = |\p u(E)|~\text{where } \p u(E) = \bigcup_{x\in E} \p u(x),~\text{for each Borel set } E\subset\Omega.$$
If $u\in C^{2}(\Omega)$, then 
$
Mu=\det D^{2} u(x)\,dx$ in $\Omega$.
\end{defn}

\begin{defn}[Aleksandrov solutions]
\label{Aleksol}Given a convex domain \(\Omega\) and a Borel measure \(\mu\) on \(\Omega\), we call a convex
function \(u:\Omega \to \R\) an \emph{Aleksandrov solution} to the Monge--Amp\`ere equation
$$
\det D^{2} u =\mu,
$$
if $\mu=Mu$ as Borel measures. When \(\mu=f\,dx\), we will say for simplicity  that \(u\) solves 
\begin{equation*}
\det D^2 u=f.
\end{equation*}
\end{defn}

In this note, we use $\det D^2 u$ to denote the Monge--Amp\`ere measure $Mu$ for a general convex function $u$. Thus, for all Borel sets 
$E\subset\Omega$,
$$\int_E |u|\det D^2 u\,dx=\int_E |u| dMu.$$

Now, we recall the basic existence and uniqueness result for solutions to the Dirichlet problem with zero boundary data for the Monge--Amp\`ere equation;  see  \cite[Theorem 2.13]{Fi}, \cite[Theorem 1.6.2]{G}, and
\cite[Theorem 1]{Har1}.
\begin{thm}[The Dirichlet problem] 
\label{Dir_thm}
Let $\Omega$ be a bounded convex domain in $\R^n$, and let $\mu$ be a nonnegative Borel measure in $\Omega$.
Then there exists a unique convex function $u\in C(\overline{\Omega})$ that is an Aleksandrov solution of
\begin{equation*}
 \left\{
 \begin{alignedat}{2}
   \det D^{2} u~&=\mu \h~&&\text{in} ~\Omega, \\\
u &=0\h~&&\text{on}~\p \Omega.
 \end{alignedat}
 \right.
\end{equation*}
\end{thm}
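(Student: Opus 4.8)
The plan is to prove uniqueness via the Aleksandrov comparison principle and existence by Perron's method combined with an approximation/stability argument. Throughout I assume $\mu(\Omega)<\infty$; this is implicit in asking for a solution $u\in C(\overline\Omega)$ (by the Aleksandrov maximum principle, any convex $u$ with $u=0$ on $\partial\Omega$ and $Mu=\mu$ satisfies $\|u\|_{L^\infty(\Omega)}\le C(n,\Omega)\,\mu(\Omega)^{1/n}$, so finiteness is necessary), and it holds in all the applications of this note, where $\mu=R(u_k)|u_k|^n\,dx$ with $u_k$ bounded. For uniqueness: if $u_1,u_2\in C(\overline\Omega)$ are convex with $Mu_1=Mu_2=\mu$ and $u_1=u_2=0$ on $\partial\Omega$, then the comparison principle for Aleksandrov solutions (\cite[Section 1.4]{G}), applied with the two roles of the pair interchanged, gives $u_1\le u_2$ and $u_2\le u_1$, hence $u_1=u_2$. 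The comparison principle itself rests on the elementary geometric fact that for convex $v\le w$ on a bounded open set $G$ with $v=w$ on $\partial G$ one has $\partial w(G)\subseteq\partial v(G)$, and therefore $Mw(G)\le Mv(G)$.

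For existence, the building block is the classical solvability of the Dirichlet problem when $\mu=\sum_{i=1}^N c_i\delta_{x_i}$ is a finite nonnegative combination of Dirac masses at interior points: a suitable pointwise maximum of convex cones with vertices $x_i$ vanishing on $\partial\Omega$ provides a subsolution, and Perron's method over the class
\[
\mathcal S_\mu=\Big\{v:\Omega\to\R \text{ convex}:\ Mv\ge\mu\text{ in }\Omega,\ \limsup_{x\to y}v(x)\le 0 \ \ \forall\, y\in\partial\Omega\Big\}
\]
yields a convex solution $u_\mu=\sup_{v\in\mathcal S_\mu}v$ of $Mu_\mu=\mu$, using that $\mathcal S_\mu$ is stable under finite maxima and under local "lifting" (replacing $v$ on a small ball by the solution with the same boundary data), together with the weak-$*$ continuity of $v\mapsto Mv$ along locally uniformly convergent sequences of convex functions (\cite[Lemma 1.2.3]{G}). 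For a general finite $\mu$, choose atomic measures $\mu_j\rightharpoonup\mu$ with $\sup_j\mu_j(\Omega)<\infty$, let $u_j$ be the corresponding solutions; the Aleksandrov estimate gives a uniform $L^\infty$ bound, convexity then gives local equi-Lipschitz bounds, and a locally uniformly convergent subsequence has a convex limit $u$ with $Mu=\lim_j Mu_j=\mu$ by weak-$*$ continuity once more.

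It remains to upgrade $u$ to a member of $C(\overline\Omega)$ with $u=0$ on $\partial\Omega$. The inequalities $u\le 0$ in $\Omega$ and $\limsup_{x\to y}u(x)\le 0$ for each $y\in\partial\Omega$ follow from upper barriers built from supporting hyperplanes of the convex set $\Omega$. The reverse inequality $\liminf_{x\to y}u(x)\ge 0$ requires, for each $y\in\partial\Omega$, a subsolution $\psi\in\mathcal S_\mu$ that is continuous at $y$ with $\psi(y)=0$; then $u\ge\psi$ forces the correct boundary value. \textbf{Constructing such a $\psi$ at boundary points lying in the relative interior of a flat face of $\partial\Omega$ is the main obstacle}, and it is precisely where the convexity (but not strict convexity) of $\Omega$ must be exploited: a single cone barrier need not be continuous there, so one instead takes $\psi$ to be a supremum of cones whose vertices approach that face, or uses the explicit barriers of Hartenstine \cite{Har1} tailored to non-strictly-convex domains. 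Patching these local barriers produces a global $\psi\in\mathcal S_\mu$ vanishing continuously on all of $\partial\Omega$; together with $u\ge\psi$ and $u\le 0$ this gives $u\in C(\overline\Omega)$ with $u=0$ on $\partial\Omega$, and since $Mu=\mu$ has already been established, $u$ is the desired Aleksandrov solution, which is unique by the first paragraph.
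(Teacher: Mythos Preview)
The paper does not prove this theorem at all; it is a background result recalled from the literature, with explicit references to \cite[Theorem 2.13]{Fi}, \cite[Theorem 1.6.2]{G}, and \cite[Theorem 1]{Har1}. There is therefore no ``paper's own proof'' to compare against.

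Your outline is essentially the standard route taken in those references: uniqueness from the comparison principle, existence first for finitely supported measures and then for general finite $\mu$ by weak-$*$ stability of the Monge--Amp\`ere measure under locally uniform convergence, and finally attainment of the zero boundary values via barriers, with Hartenstine's construction \cite{Har1} handling the flat portions of $\partial\Omega$. Your remark that $\mu(\Omega)<\infty$ is an implicit hypothesis is correct and worth making explicit. One small technical caution: the Perron family $\mathcal S_\mu$ of Monge--Amp\`ere \emph{subsolutions} is not obviously closed under finite maxima (the inequality $M(\max(v_1,v_2))\ge\mu$ does not follow from $Mv_i\ge\mu$ in general), and the standard treatments in \cite{G,Har1} avoid this by building the atomic solution directly rather than via a Perron envelope of subsolutions. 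This does not affect the overall scheme, which is otherwise sound.
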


For later reference, we state the celebrated Aleksandrov maximum principle for the Monge--Amp\`ere equation; see \cite[Theorem 2.8]{Fi} and \cite[Theorem 1.4.2]{G}.

\begin{thm}[Aleksandrov's maximum principle]\label{Alek_thm} Let  $\Omega\subset\R^n$ be a bounded convex domain. Let $u\in C(\overline{\Omega})$ be a convex function.
If $u=0$ on $\p \Omega$, then
\[
|u(x)|^{n}\le C(n)(\emph{diam}\Omega)^{n-1}\emph{dist}(x,\partial \Omega)\int_{\Omega}\det D^2 u\,dx\qquad \text{ for all } x\in\Omega.
\]
 \end{thm}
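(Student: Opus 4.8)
The plan is to prove the bound at a fixed point $x_0\in\Omega$ by comparing the Monge-Amp\`ere mass of $u$ on $\Omega$ with the size of the subdifferential at the apex of an explicit cone, and then to estimate that subdifferential directly from the position of $x_0$ inside $\Omega$. Since $u$ is convex with $u=0$ on $\p\Omega$, we have $u\le 0$ on $\Omega$, so we may assume $h:=|u(x_0)|>0$; set $d:=\dist(x_0,\p\Omega)$ and $D:=\diam\Omega$. Let $v\in C(\ov\Omega)$ be the convex function whose graph is the cone with vertex $(x_0,u(x_0))$, linear along every segment from $x_0$ to $\p\Omega$ and vanishing on $\p\Omega$. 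Because $u$ is convex along each such segment and agrees with $v$ at $x_0$ and on $\p\Omega$, it lies below the chord, so $u\le v$ on $\ov\Omega$, with $u(x_0)=v(x_0)$ and $u=v=0$ on $\p\Omega$.

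First I would show $\p v(x_0)\subseteq\p u(\Omega)$. Given $p\in\p v(x_0)$, the affine function $\ell(y)=u(x_0)+p\cdot(y-x_0)$ satisfies $\ell\le v=0$ on $\p\Omega$ (from $p\in\p v(x_0)$) and $\ell(x_0)=u(x_0)$; hence $u-\ell\ge 0$ on $\p\Omega$ while $(u-\ell)(x_0)=0$, so the minimum of the continuous function $u-\ell$ over $\ov\Omega$ is $\le 0$ and is attained at some $x^*\in\Omega$, which forces $p\in\p u(x^*)$. With Definition \ref{MAdef} this gives $\int_\Omega\det D^2 u~dx=|\p u(\Omega)|\ge|\p v(x_0)|$.

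Next I would bound $|\p v(x_0)|$ from below by a constant times $h^n/(d\,D^{n-1})$. From the structure of the cone one checks $\p v(x_0)=\{p\in\R^n:\ p\cdot(z-x_0)\le h\ \text{ for all }z\in\ov\Omega\}$. Let $\bar z\in\p\Omega$ be a nearest boundary point, $|\bar z-x_0|=d$, and $\nu=(\bar z-x_0)/d$; the nearest-point property forces the supporting hyperplane of $\Omega$ at $\bar z$ to be orthogonal to $\nu$, so $(z-x_0)\cdot\nu\le d$ for all $z\in\ov\Omega$, while also $|z-x_0|\le D$. Decomposing $p=a\nu+q$ with $q\perp\nu$, and $z-x_0=t\nu+w$ with $w\perp\nu$ (so $t\le d$, $|t|\le D$, $|w|\le D$), the inequality $p\cdot(z-x_0)=at+q\cdot w\le h$ for all $z\in\ov\Omega$ holds whenever $ad+|q|D\le h$ (when $a\ge 0$) or $|a|D+|q|D\le h$ (when $a<0$); the set of such $(a,q)$ has measure at least $\int_0^{h/d}\omega_{n-1}\big((h-ad)/D\big)^{n-1}\,da=\omega_{n-1}h^n/(n\,d\,D^{n-1})$, with $\omega_{n-1}$ the volume of the unit ball in $\R^{n-1}$. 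Combining the two bounds yields $|u(x_0)|^n\le \tfrac{n}{\omega_{n-1}}(\diam\Omega)^{n-1}\dist(x_0,\p\Omega)\int_\Omega\det D^2 u~dx$.

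The main obstacle is producing the \emph{sharp} factor $\dist(x_0,\p\Omega)$ instead of the weaker $\diam\Omega$: inscribing a ball of radius $h/D$ in $\p v(x_0)$ only gives $|u(x_0)|^n\le C(n)(\diam\Omega)^n\int_\Omega\det D^2 u$. One must exploit that in the direction $\nu$ toward the near boundary the body $\p v(x_0)$ reaches all the way out to distance $h/d$, so it is a long thin convex body whose volume is genuinely of order $h^n/(d\,D^{n-1})$; the two ingredients that make this work are the one-sided slab inclusion $\Omega\subset\{z:(z-x_0)\cdot\nu\le d\}$ coming from the nearest-point property, and an honest lower estimate of the volume of $\p v(x_0)$ obtained by integrating in the $\nu$-direction, as above.
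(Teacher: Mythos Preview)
The paper does not prove Theorem~\ref{Alek_thm}; it is quoted as a classical result with references to \cite[Theorem~2.8]{Fi} and \cite[Theorem~1.4.2]{G}, so there is no in-paper argument to compare against. Your argument is the standard cone-comparison proof found in those references and is correct: the inclusion $\partial v(x_0)\subset\partial u(\Omega)$ via the touching-affine-function argument, the identification of $\partial v(x_0)$ with $\{p:\ p\cdot(z-x_0)\le h\ \text{for all }z\in\overline{\Omega}\}$, the supporting-hyperplane fact $(z-x_0)\cdot\nu\le d$ at the nearest boundary point, and the volume computation $\int_0^{h/d}\omega_{n-1}\big((h-ad)/D\big)^{n-1}\,da=\omega_{n-1}h^n/(n\,d\,D^{n-1})$ all check out, yielding the stated estimate with $C(n)=n/\omega_{n-1}$.
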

 
 We have the following proposition which will play a crucial role in the proof of Theorem \ref{IISthm}.
\begin{prop}[Reverse Aleksandrov estimate]
\label{ReA}
 Let $\Omega$ be a bounded convex domain in $\R^n$.  Let $\lambda[\Omega]$ be the Monge--Amp\`ere eigenvalue of $\Omega$ and let $w$ be a nonzero Monge--Amp\`ere eigenfunction of $\Omega$.
Assume that $u\in C^{5}(\Omega)\cap C(\overline{\Omega})$ is a strictly convex function in $\Omega$ with  $u=0$ on $\Omega$ and satisfies
$$\int_\Omega (\det D^2 u)^{1/n} |w|^{n-1}\,dx<\infty.$$
Then
\begin{equation}
\label{ReA2}
\int_\Omega (\lambda[\Omega])^{1/n} |u| |w|^n\,dx\geq \int_\Omega (\det D^2 u)^{1/n} |w|^n\,dx.
\end{equation}
\end{prop}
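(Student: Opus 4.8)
\emph{Proof strategy.} The estimate (\ref{ReA2}) belongs to the same circle of ideas as the nonlinear integration by parts of \cite{L}, and I would prove it by combining a pointwise matrix inequality with a double integration by parts carried out on an exhaustion of $\Omega$ by smooth sublevel sets of $w$. The pointwise ingredient is the elementary fact that, for a symmetric positive semidefinite matrix $A$ and a symmetric positive definite matrix $B$,
\[
(\det A)^{1/n}(\det B)^{(n-1)/n}\ \le\ \tfrac1n\,\trace\big((\operatorname{cof}B)A\big),\qquad \operatorname{cof}B=(\det B)B^{-1},
\]
which is just the arithmetic--geometric mean inequality applied to the nonnegative eigenvalues of $B^{-1/2}AB^{-1/2}$. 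Before using it I would record that both $u$ and $w$ are $\le 0$ in $\Omega$ (being convex, continuous on $\overline{\Omega}$, and vanishing on $\partial\Omega$), and that in fact $w<0$ everywhere in $\Omega$ (a convex function with an interior maximum is constant), so that $\det D^2 w=\lambda[\Omega]|w|^n>0$ in $\Omega$, and hence $D^2 w>0$ in $\Omega$ because $D^2 w\ge 0$ has nonvanishing determinant there. Therefore the matrix inequality applies pointwise in $\Omega$ with $A=D^2 u\ge 0$ and $B=D^2 w>0$, both functions being smooth in $\Omega$, and gives
\[
(\det D^2 u)^{1/n}(\det D^2 w)^{(n-1)/n}\ \le\ \tfrac1n\, w^{ij}u_{ij}\qquad\text{in }\Omega,
\]
where $(w^{ij})=\operatorname{cof}D^2 w$ and repeated indices are summed.

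Next I would integrate this inequality, multiplied by $-(w+t)$, over the convex open set $\Omega_t:=\{x\in\Omega:\ w(x)<-t\}$ for $0<t<\|w\|_{L^\infty(\Omega)}$. Since $w\in C(\overline{\Omega})$ vanishes on $\partial\Omega$, the closure $\overline{\Omega_t}$ is a compact subset of $\Omega$; and since $w$ is smooth with $D^2 w>0$, its gradient vanishes only at its minimum point, so $\partial\Omega_t=\{w=-t\}$ is a smooth hypersurface whose outer unit normal is $\nu=\nabla w/|\nabla w|$. On $\Omega_t$ the weight $-(w+t)$ is nonnegative and vanishes on $\partial\Omega_t$, so after multiplication and integration the pointwise inequality gives
\[
\int_{\Omega_t}\big(-(w+t)\big)(\det D^2 u)^{1/n}(\det D^2 w)^{(n-1)/n}\,dx\ \le\ \tfrac1n\int_{\Omega_t}\big(-(w+t)\big)w^{ij}u_{ij}\,dx.
\]
Integrating the right-hand side by parts twice---using that the cofactor matrix is divergence free, $\partial_j w^{ij}=0$, and that $w^{ij}w_{ij}=n\det D^2 w$---the first boundary term vanishes because $w+t=0$ on $\partial\Omega_t$, while the second boundary term equals $\tfrac1n\int_{\partial\Omega_t}(w^{ij}w_iw_j)\,|\nabla w|^{-1}u\,dS$, which is $\le 0$ since $\operatorname{cof}D^2 w$ is positive definite and $u\le 0$. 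Discarding it, I obtain
\[
\int_{\Omega_t}\big(-(w+t)\big)(\det D^2 u)^{1/n}(\det D^2 w)^{(n-1)/n}\,dx\ \le\ \int_{\Omega_t}(-u)\det D^2 w\,dx.
\]

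Finally I would substitute $\det D^2 w=\lambda[\Omega]|w|^n$, so that the left-hand side equals $(\lambda[\Omega])^{(n-1)/n}\int_{\Omega_t}(-(w+t))(\det D^2 u)^{1/n}|w|^{n-1}\,dx$ and the right-hand side equals $\lambda[\Omega]\int_{\Omega_t}(-u)|w|^n\,dx$, and then let $t\downarrow 0$. As $t\downarrow 0$ one has $\Omega_t\uparrow\Omega$ and $-(w+t)\uparrow|w|$ pointwise, while $-u=|u|\ge 0$ and the right-hand side stays bounded by $\lambda[\Omega]\int_\Omega|u||w|^n\,dx<\infty$; the monotone convergence theorem then yields $(\lambda[\Omega])^{(n-1)/n}\int_\Omega(\det D^2 u)^{1/n}|w|^n\,dx\le\lambda[\Omega]\int_\Omega|u||w|^n\,dx$, and dividing by $(\lambda[\Omega])^{(n-1)/n}>0$ gives (\ref{ReA2}). (Finiteness of the left-hand side of (\ref{ReA2}) follows from the hypothesis $\int_\Omega(\det D^2 u)^{1/n}|w|^{n-1}\,dx<\infty$ together with $|w|^n\le\|w\|_{L^\infty(\Omega)}|w|^{n-1}$.) The one delicate point is the integration by parts up to $\partial\Omega$: since $w$ and $u$ need not be $C^1$ on $\overline{\Omega}$, one cannot work directly on $\Omega$, and the purpose of the exhaustion $\{\Omega_t\}$ is precisely to transfer every boundary term to a smooth interior hypersurface where it either vanishes or has the right sign, after which the passage to the limit is a routine monotone convergence argument.
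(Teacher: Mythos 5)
Your proof is correct and follows essentially the same route as the paper. The paper simply applies the nonlinear integration-by-parts inequality of Proposition \ref{NIBP} (imported from \cite{L}) with $v=w$, substitutes $\det D^2 w=\lambda[\Omega]|w|^n$, and divides by $(\lambda[\Omega])^{(n-1)/n}$; you instead re-derive that inequality from scratch---the AM--GM matrix bound, the double integration by parts on the sublevel sets $\{w<-t\}$ with the favorable sign of the boundary term, and monotone convergence as $t\downarrow 0$---which is precisely the mechanism behind the cited proposition.
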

\begin{rem}
Compared to Theorem \ref{Alek_thm}, the function $u$ appears on the dominating side in (\ref{ReA2}) in Proposition \ref{ReA}. For this reason,
(\ref{ReA2}) can be viewed as a sort of reverse Aleksandrov estimate. Moreover, this estimate is sharp. When $u$ is a Monge--Amp\`ere eigenfunction of $\Omega$, (\ref{ReA2}) is an equality.
\end{rem}
To prove Proposition \ref{ReA}, we recall the following {\it nonlinear integration by parts} established in \cite[Proposition 1.7]{L}.
\begin{prop}[Nonlinear integration by parts]
 \label{NIBP} Let $\Omega$ be a bounded convex domain in $\R^n$.
 Let $u, v\in C(\overline{\Omega})\cap C^5 (\Omega)$ be strictly convex functions in $\Omega$ with $u=v=0$ on $\p\Omega$. If 
 \begin{equation*}\int_{\Omega}(\det D^2 u)^{\frac{1}{n}}  (\det D^2 v)^{\frac{n-1}{n}}\,dx<\infty,~\text{and}~\int_{\Omega}\det D^2 v\,dx<\infty,
 \end{equation*} then
\begin{equation*} \int_{\Omega} |u|\det D^2 v\,dx \geq \int_{\Omega} |v|(\det D^2 u)^{\frac{1}{n}} (\det D^2 v)^{\frac{n-1}{n}}\,dx.
\end{equation*}
\end{prop}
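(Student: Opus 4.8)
The plan is to obtain (\ref{ReA2}) as a direct consequence of the nonlinear integration by parts in Proposition \ref{NIBP}, applied with the second function $v$ chosen to be the given Monge-Amp\`ere eigenfunction $w$. The mechanism is simple: since $w$ solves (\ref{EVP_eq}) with eigenvalue $\lambda[\Omega]$, we have the pointwise identity $\det D^2 w = \lambda[\Omega]\,|w|^n$ in $\Omega$, so $(\det D^2 w)^{(n-1)/n} = (\lambda[\Omega])^{(n-1)/n}|w|^{n-1}$, and both the integrability hypotheses and the inequality produced by Proposition \ref{NIBP} can be rewritten purely in terms of $|w|$, $|u|$, $\det D^2 u$ and the constant $\lambda[\Omega]$.

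First I would verify that $w$ is an admissible choice for $v$ in Proposition \ref{NIBP}. By Theorem \ref{ev_thm}(i), $w \in C^{0,\beta}(\overline\Omega)\cap C^\infty(\Omega)$, so in particular $w \in C(\overline\Omega)\cap C^5(\Omega)$ and $w = 0$ on $\p\Omega$. Since $w$ is convex, nonzero, and vanishes on $\p\Omega$, the maximum principle for convex functions gives $w < 0$ throughout $\Omega$ (an interior zero would be an interior maximum of the convex function $w$, forcing $w \equiv 0$); hence $\det D^2 w = \lambda[\Omega]|w|^n > 0$ in $\Omega$, and as $w \in C^2(\Omega)$ this means $D^2 w$ is positive definite everywhere, so $w$ is strictly convex in $\Omega$. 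The function $u$ is admissible for the first slot directly by hypothesis ($u\in C^5(\Omega)\cap C(\overline\Omega)$, strictly convex, $u=0$ on $\p\Omega$).

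Next I would check the two integrability conditions of Proposition \ref{NIBP} with $(u,v)=(u,w)$. For the first,
$$\int_\Omega (\det D^2 u)^{1/n}(\det D^2 w)^{(n-1)/n}\,dx = (\lambda[\Omega])^{(n-1)/n}\int_\Omega (\det D^2 u)^{1/n}|w|^{n-1}\,dx < \infty$$
by the standing assumption on $u$. For the second, $\int_\Omega \det D^2 w\,dx = \lambda[\Omega]\int_\Omega |w|^n\,dx \le \lambda[\Omega]\,\|w\|_{L^\infty(\Omega)}^n\,|\Omega| < \infty$ since $w \in C(\overline\Omega)$ and $\Omega$ is bounded. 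Proposition \ref{NIBP} then yields
$$\int_\Omega |u|\det D^2 w\,dx \ \ge\ \int_\Omega |w|(\det D^2 u)^{1/n}(\det D^2 w)^{(n-1)/n}\,dx,$$
and substituting $\det D^2 w = \lambda[\Omega]|w|^n$ on both sides turns this into
$$\lambda[\Omega]\int_\Omega |u|\,|w|^n\,dx \ \ge\ (\lambda[\Omega])^{(n-1)/n}\int_\Omega (\det D^2 u)^{1/n}|w|^n\,dx.$$
Dividing through by the positive constant $(\lambda[\Omega])^{(n-1)/n}$ gives exactly (\ref{ReA2}).

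I do not expect a genuine obstacle here: the argument is a bookkeeping reduction to Proposition \ref{NIBP}, and the only point requiring care is confirming that the eigenfunction $w$ meets the $C^5$ and strict convexity hypotheses of that proposition, which follows from the regularity asserted in Theorem \ref{ev_thm}(i) together with the strict negativity $w<0$ in $\Omega$. The sharpness remarked after the statement is then immediate: if $u$ is itself a Monge-Amp\`ere eigenfunction, then $\det D^2 u = \lambda[\Omega]|u|^n$ by Theorem \ref{ev_thm}(ii), so $(\det D^2 u)^{1/n} = (\lambda[\Omega])^{1/n}|u|$ and both sides of (\ref{ReA2}) equal $(\lambda[\Omega])^{1/n}\int_\Omega |u|\,|w|^n\,dx$.
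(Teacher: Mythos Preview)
Your argument is a correct and detailed proof of Proposition~\ref{ReA} (the reverse Aleksandrov estimate, inequality~(\ref{ReA2})), and indeed it matches the paper's own proof of that proposition almost line for line. However, the statement you were asked to address is Proposition~\ref{NIBP}, the nonlinear integration by parts itself, not its corollary. You have used Proposition~\ref{NIBP} as a black box to deduce (\ref{ReA2}); you have not proved Proposition~\ref{NIBP}.

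In the paper, Proposition~\ref{NIBP} is not proved at all: it is quoted from \cite[Proposition~1.7]{L}. So there is no ``paper's own proof'' to compare against for this statement, and your write-up, while perfectly fine as a proof of Proposition~\ref{ReA}, does not engage with the content of Proposition~\ref{NIBP} (which requires an argument involving the cofactor matrix of $D^2 v$, the concavity of $(\det)^{1/n}$ on positive matrices, and a careful integration by parts with boundary control coming from the strict convexity and the zero Dirichlet data). If the intent was to supply a proof of Proposition~\ref{NIBP}, you would need to go back to \cite{L} and reproduce or adapt that argument; what you have written does not do this.
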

\begin{proof}[Proof of Proposition \ref{ReA}]
We apply Proposition \ref{NIBP} to $u$ and $v=w$. 
Then, using $\det D^2 w= \lambda[\Omega] |w|^n$, we get
\begin{eqnarray*}\int_\Omega  |u| \lambda[\Omega] |w|^n\, dx= \int_\Omega  |u| \det D^2 w\,dx&\geq& \int_\Omega |w|(\det D^2 u)^{1/n} (\det D^2 w)^{\frac{n-1}{n}}\,dx
\\&=&  \int_\Omega ( \lambda[\Omega])^{\frac{n-1}{n}} (\det D^2 u)^{1/n} |w|^n\,dx.
\end{eqnarray*}
Dividing the first and last expressions in the above estimates by $  ( \lambda[\Omega])^{\frac{n-1}{n}}$, we obtain (\ref{ReA2}).
\end{proof}

\section{Eventual smoothness and a new monotonicity formula for the iterative scheme}
\label{Eventual_sec}
In this section, we show the eventual smoothness  and a new monotonicity formula for the iterative scheme (\ref{IIS}).  They are stated in Proposition \ref{reg_prop} and Lemma \ref{mono_lem}.

We have the following eventual smoothness of solutions to the iterative scheme (\ref{IIS}). The proof is a modification of the proof of Proposition 2.8 in  \cite{L}.
\begin{prop}
\label{reg_prop}
 Let $\Omega$ be a bounded convex domain in $\R^n$ with nonempty interior. Let $u_0 \in C(\Omega)$ be a nonzero convex function on $\Omega$ with $0<R(u_0)<\infty$.
For $k \geq 0$, define the sequence $u_k \in \K$ to be the solutions of the Dirichlet problem (\ref{IIS}).
 Then, $u_1\in C^{0, \frac{1}{n}}(\Omega)$, and  $u_{k+1}$ is strictly convex in $\Omega$ and $u_{k+1}\in C^{2k, \frac{1}{n}}(\Omega)$ for all $k\geq 1$. 
\end{prop}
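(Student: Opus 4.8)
Following the strategy of \cite[Proposition 2.8]{L}, the plan is to argue by induction on $k$, bootstrapping the regularity of the scheme; the argument splits into three stages: the first iterate $u_1$ (which is only H\"older), the second iterate $u_2$ (where the interior regularity theory for the Monge-Amp\`ere equation first becomes available), and the inductive step producing $u_{k+1}$ from $u_k$ for $k\ge 2$. The crucial preliminary, used at every stage, is that the scheme stays nondegenerate: for each $k\ge 1$, $u_k\in C(\overline\Omega)$ vanishes on $\p\Omega$, $0<R(u_k)<\infty$, and---most importantly---$u_k<0$ at \emph{every} point of $\Omega$, not merely $u_k\le 0$. Indeed, $R(u_0)>0$ forces $\int_\Omega\det D^2u_1=R(u_0)\int_\Omega|u_0|^n\,dx>0$, so $u_1\not\equiv 0$; and a convex function that is $\le 0$ in $\Omega$, vanishes on $\p\Omega$, and vanishes at a single interior point must vanish along every chord through that point, hence be identically zero, so $u_1<0$ in $\Omega$. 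The same reasoning, together with the identity $R(u_k)=R(u_{k-1})\big(\int_\Omega|u_k|\,|u_{k-1}|^n\,dx\big)\big/\big(\int_\Omega|u_k|^{n+1}\,dx\big)$, which keeps $R(u_k)\in(0,\infty)$, propagates $u_{k+1}<0$ in $\Omega$ for all $k$. This pointwise strict negativity is precisely what makes each density $|u_k|^n$ locally bounded below by a positive constant---and it does \emph{not} require $u_k$ itself to be strictly convex.

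\emph{The iterate $u_1$.} Since $\|u_0\|_{L^{n+1}(\Omega)}<\infty$, H\"older's inequality gives $\int_\Omega|u_0|^n\,dx<\infty$, so the right-hand side $R(u_0)|u_0|^n$ of the first line of (\ref{IIS}) is a nonnegative $L^1$ density, i.e.\ a finite Borel measure on $\Omega$; Theorem \ref{Dir_thm} then yields the convex Aleksandrov solution $u_1\in C(\overline\Omega)$. Applying Aleksandrov's maximum principle (Theorem \ref{Alek_thm}) to $u_1$ gives $|u_1(x)|\le C(u_0,\Omega,n)\,[\dist(x,\p\Omega)]^{1/n}$ for all $x\in\Omega$; combining this boundary decay with the interior local Lipschitz bound for convex functions, a routine argument upgrades $u_1$ to $C^{0,1/n}(\overline\Omega)$, in particular $u_1\in C^{0,1/n}(\Omega)$.

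\emph{The iterate $u_2$ and the induction.} As $u_1$ is convex it is locally Lipschitz in $\Omega$, and as $u_1<0$ in $\Omega$ the density $f_1:=R(u_1)|u_1|^n=R(u_1)(-u_1)^n$ is locally Lipschitz on $\Omega$ and locally bounded between two positive constants. Hence, by Caffarelli's strict convexity theorem and the interior $C^{2,\alpha}$ estimate for the Monge-Amp\`ere equation (see \cite{Fi,G}), taken with $\alpha=1/n$, $u_2$ is strictly convex in $\Omega$ and $u_2\in C^{2,1/n}(\Omega)$; this is the case $k=1$. For the inductive step, suppose $k\ge 2$, $u_k$ is strictly convex with $u_k\in C^{2(k-1),1/n}(\Omega)$, and $u_k<0$ in $\Omega$. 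Then $-u_k$ is a positive function of class $C^{2(k-1),1/n}$, so $f_k:=R(u_k)(-u_k)^n\in C^{2(k-1),1/n}_{\mathrm{loc}}(\Omega)$ and is locally pinched between positive constants; Caffarelli's theorem makes $u_{k+1}$ strictly convex, and the higher-order interior Schauder theory for $\det D^2u_{k+1}=f_k$ (differentiating the equation and bootstrapping from the $C^{2,1/n}$ bound) upgrades this to $u_{k+1}\in C^{2(k-1)+2,1/n}(\Omega)=C^{2k,1/n}(\Omega)$. This closes the induction.

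\emph{Where the difficulty lies.} The only genuine subtlety is that $u_0$ is assumed merely continuous on the \emph{open} set $\Omega$, so a priori it may be unbounded and badly behaved near $\p\Omega$; the resolution is that a single application of Theorem \ref{Dir_thm} together with Aleksandrov's maximum principle already places $u_1$ in $C^{0,1/n}(\overline\Omega)$, after which the boundary plays no further role and the purely interior Caffarelli theory does all the remaining work. The one point one must track carefully---easy but essential---is the distinction between the pointwise strict negativity $u_k<0$ in $\Omega$ (which is what keeps $|u_k|^n$ bounded below and hence lets us invoke Caffarelli) and the strict convexity of $u_k$ itself; the latter genuinely fails for $u_1$ when $u_0$ vanishes somewhere, and appears only from $u_2$ onward.
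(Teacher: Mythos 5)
Your overall strategy — nondegeneracy via pointwise strict negativity, then bootstrap regularity using Caffarelli's strict convexity theorem and interior $C^{2,\alpha}$ estimates — is the same as the paper's, and the observation that one should distinguish ``$u_k<0$ in $\Omega$'' from ``$u_k$ strictly convex'' is exactly right. However, there is a genuine gap in how you invoke the interior theory.

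Both Caffarelli's localization (strict convexity) theorem and the interior $C^{2,\alpha}$ estimates that you cite require the Monge--Amp\`ere density to be pinched between \emph{positive} constants \emph{throughout} the domain on which you apply them, together with affine (or constant) boundary data on that domain. Here the density $R(u_k)\,|u_k|^n$ vanishes on $\p\Omega$, because $u_k=0$ there. So ``locally bounded between two positive constants'' is not, by itself, a licence to apply these theorems on $\Omega$: a supporting segment of the graph of $u_{k+1}$ could a priori run out to $\p\Omega$, and the local statements you quote do not rule this out. This is precisely the point at which the paper works on the sublevel sets $\Omega(\e):=\{u_{k+1}\le -\e\}$ for $\e\in(0,M_{k+1})$. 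Each $\Omega(\e)$ is a convex set compactly contained in $\Omega$, on which $u_{k+1}=-\e$ on the boundary (constant data) and on which $|u_k|$, hence the density, is bounded below by a positive constant $m(\e)>0$; on $\Omega(\e)$ Caffarelli's theorem then gives strict convexity, and the Schauder-type bootstrap gives $u_{k+1}\in C^{2k,1/n}_{\mathrm{loc}}(\Omega(\e))$. Letting $\e\downarrow 0$ exhausts $\Omega$. Your final paragraph identifies the only subtlety as the behaviour of $u_0$ near $\p\Omega$, but this second subtlety — that the density is degenerate at $\p\Omega$ and one must localize to level sets before invoking Caffarelli — is present at every stage $k\geq 1$ and is the technical heart of the proof. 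Apart from this, your argument and exponents line up with the paper's; once you insert the $\Omega(\e)$ localization, the proof is complete.
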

We recall that a convex function $u$ on a bounded convex domain $\Omega$ is said to be strictly convex in $\Omega$, if for any $x\in\Omega$ and $p\in\p u(x)$, 
$$u(z)> u(x) + p\cdot (z-x)~\text{for all~} z\in\Omega\backslash \{x\}.$$

\begin{proof} [Proof of Proposition \ref{reg_prop}]
First, using the Aleksandrov maximum principle in Theorem \ref{Alek_thm}, we note that each $u_{k+1}$ is uniformly bounded, that is $M_{k+1}=\|u_{k+1}\|_{L^{\infty}(\Omega)}<\infty.$ 
The regularity $C^{0, \frac{1}{n}}(\overline{\Omega})$ of $u_1$ is a consequence of the Aleksandrov maximum principle. Since $u_0\not\equiv 0$, and $R(u_0)>0$,  we have 
$u_1\not\equiv 0$. The convexity of $u_1$ shows that $u_1<0$ in $\Omega$.

  \medskip
We show by induction the following:\\
{\bf Claim. } $u_{k+1}$ is strictly convex in $\Omega$ and $u_{k+1}\in C^{2k, \frac{1}{n}}(\Omega)$ for all $k\geq 1$. 

We start with the base case $k=1$. For each $\e\in (0, M_2)$, let $\Omega':=\Omega(\e)=\{x\in\Omega: u_{2}(x)<-\e\}$. Since $u_{2}\in C(\overline{\Omega})$ is convex, the set $\Omega(\e)$ is convex
with nonempty interior. 
Note that, since $|u_1|>0$ in $\Omega$ and $u_1\in C^{0, \frac{1}{n}}(\overline{\Omega})$, by continuity, $|u_1|\geq m(n, \e)>0$ in $\overline{\Omega'}$.
Since $$\lambda [\Omega]m^n(n,\e)\leq \det D^2 u_{2}=R(u_1)|u_1|^n  \leq R(u_1) M_1^n \text{ in }\Omega' \text{ and }u_{2}=-\e \text{ on }\p\Omega', $$ the function $u_{2}$ is strictly convex in $\Omega'$ by the
localization theorem of Caffarelli \cite{C1} (see also \cite[Theorem 4.10]{Fi} and \cite[Corollary 5.2.2]{G}). Moreover,  $u_1\in C^{0,\frac{1}{n}}(\Omega')$. Now, using Caffarelli's $C^{2,\alpha}$ estimates \cite{C2}, we have
$u_2\in C^{2,\frac{1}{n}}_{\text{loc}}(\Omega')$. Since $\e\in (0, M_2)$ is arbitrary, we conclude $u_2\in C^{2,\frac{1}{n}}(\Omega)$ and $u_2$ is strictly convex in $\Omega$.
\vglue 0.2cm
Suppose the claim holds up to $k-1$ where $k\geq 2$. We show it also holds for $k$. For each $\e_k\in (0, M_{k+1})$, let $\Omega(\e_k)=\{x\in\Omega: u_{k+1}(x)< -\e_k\}$. Since $u_{k+1}\in C(\overline{\Omega})$ is convex, the set $\Omega(\e_k)$ is convex
with nonempty interior. Let us denote $\Omega_k'=\Omega(\e_k)$ for brevity. Note that, by continuity, $|u_k|\geq m(n, k, \e)>0$ in $\overline{\Omega_k'}$.

\medskip
Since $\lambda[\Omega] m^n(n, k,\e)\leq \det D^2 u_{k+1}=R(u_k)|u_k|^n  \leq R(u_k) M_k^n$ in $\Omega_k'$ and $u_{k+1}=-\e_k$ on $\p\Omega_k'$, the function $u_{k+1}$ is strictly convex in $\Omega_k'$ by the
localization theorem of Caffarelli. By the induction hypothesis, $u_k\in C^{2(k-1),\frac{1}{n}}(\Omega_k')$.
In the interior of $\Omega_k'$, the equation $\det D^2 u_{k+1}= R(u_k)|u_k|^n$ now becomes uniformly elliptic with $C^{2(k-1),\frac{1}{n}}$ right hand side. Therefore, 
 we
have $u_{k+1}\in C^{2k,\frac{1}{n}}_{\text{loc}}(\Omega_k')$. Since $\e_k\in (0, M_{k+1})$ is arbitrary, we conclude $u_{k+1}\in C^{2k, \frac{1}{n}}(\Omega)$ and $u_{k+1}$ is strictly convex in $\Omega$.
\end{proof}
Our key observation is the following monotonicity result for the iterative scheme (\ref{IIS}). Note that $$R(u_k)\geq \lambda[\Omega] \quad \text{for all }k\geq 1.$$
\begin{lem} [Monotonicity formula for the iterative scheme]
\label{mono_lem}
 Let $\Omega$ be a bounded convex domain in $\R^n$. 
 Let $u_0 \in C(\Omega)$ be a nonzero convex function on $\Omega$ with $0<R(u_0)<\infty$.
 Let $w$ be a nonzero Monge--Amp\`ere eigenfunction of $\Omega$. Consider the iterative scheme (\ref{IIS}).
If $k\geq 3$, then
$$\int_{\Omega} |u_{k+1}| |w|^n\,dx\geq \int_{\Omega} |u_k| |w|^n\,dx + \frac{[R(u_k)]^{1/n}-(\lambda[\Omega])^{1/n}}{(\lambda[\Omega])^{1/n}} \int_{\Omega} |u_k| |w|^n\,dx .$$
\end{lem}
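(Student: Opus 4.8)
The plan is to obtain the monotonicity formula as an essentially immediate consequence of the reverse Aleksandrov estimate of Proposition \ref{ReA}, applied to the function $u=u_{k+1}$, once the regularity hypotheses required there have been checked. The defining equation of the scheme (\ref{IIS}) does all the remaining work: it expresses $(\det D^2 u_{k+1})^{1/n}$ as $[R(u_k)]^{1/n}|u_k|$, so the conclusion of Proposition \ref{ReA} turns directly into a comparison between $\int_\Omega |u_{k+1}||w|^n\,dx$ and $\int_\Omega |u_k||w|^n\,dx$, with the correct coefficient appearing automatically.

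First I would verify the hypotheses of Proposition \ref{ReA} for $u=u_{k+1}$ when $k\geq 3$. By Proposition \ref{reg_prop}, $u_{k+1}$ is strictly convex in $\Omega$ and belongs to $C^{2k,1/n}(\Omega)$; since $k\geq 3$ forces $2k\geq 6$, we get $u_{k+1}\in C^5(\Omega)$, while $u_{k+1}\in\K\subset C(\overline{\Omega})$ with $u_{k+1}=0$ on $\p\Omega$. It then remains to check the integrability condition $\int_\Omega (\det D^2 u_{k+1})^{1/n}|w|^{n-1}\,dx<\infty$. Using the first line of (\ref{IIS}), this integral equals $[R(u_k)]^{1/n}\int_\Omega |u_k||w|^{n-1}\,dx$, which is finite because $u_k\in C(\overline{\Omega})$ and $w\in C(\overline{\Omega})$ are bounded and $\Omega$ is bounded; here $R(u_k)$ is finite and $\geq\lambda[\Omega]$ since $u_k\in\K$, as already recorded before the statement of the lemma.

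Then I would apply Proposition \ref{ReA} to $u=u_{k+1}$ and substitute $(\det D^2 u_{k+1})^{1/n}=[R(u_k)]^{1/n}|u_k|$ on the right-hand side of (\ref{ReA2}), which gives
$$(\lambda[\Omega])^{1/n}\int_\Omega |u_{k+1}||w|^n\,dx\;\geq\;[R(u_k)]^{1/n}\int_\Omega |u_k||w|^n\,dx.$$
Dividing by $(\lambda[\Omega])^{1/n}>0$ and writing $[R(u_k)]^{1/n}/(\lambda[\Omega])^{1/n}=1+\big([R(u_k)]^{1/n}-(\lambda[\Omega])^{1/n}\big)/(\lambda[\Omega])^{1/n}$ yields precisely the asserted inequality; no division by $\int_\Omega |u_k||w|^n\,dx$ is needed, so positivity of that integral is irrelevant to the argument (although it does hold).

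The substance of the lemma is thus entirely carried by Proposition \ref{ReA} — hence by the nonlinear integration by parts of Proposition \ref{NIBP} — together with the eventual smoothness of Proposition \ref{reg_prop}, and no genuine obstacle remains in the proof of the lemma itself. The only point that must be handled with care is the regularity threshold: Proposition \ref{ReA} requires $u_{k+1}\in C^5(\Omega)$, and $C^{2k,1/n}(\Omega)\subset C^5(\Omega)$ needs $2k\geq 5$, i.e. $k\geq 3$, which is exactly the hypothesis of the lemma; obtaining the estimate for smaller $k$ would require a sharper regularity statement for $u_2,u_3$.
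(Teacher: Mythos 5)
Your proof is correct and follows essentially the same route as the paper's: apply Proposition \ref{ReA} to $u_{k+1}$ (using Proposition \ref{reg_prop} for the $C^5$ regularity when $k\geq 3$), substitute $\det D^2 u_{k+1}=R(u_k)|u_k|^n$, and rearrange. Your added verification of the integrability hypothesis in Proposition \ref{ReA} is a minor piece of bookkeeping that the paper leaves implicit, but the argument is identical in substance.
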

\begin{proof}[Proof of Lemma \ref{mono_lem}]
By Proposition \ref{reg_prop}, we have $u_{k+1}\in C^{6, \frac{1}{n}}(\Omega)$ for all $k\geq 3$. We apply Proposition \ref{ReA} to $u_{k+1}$ and recall
$$\det D^2 u_{k+1}= R(u_k)|u_{k}|^n,$$
 to get
\begin{eqnarray*}\int_\Omega  |u_{k+1}| |w|^n\,dx&\geq& \frac{1}{(\lambda[\Omega])^{1/n}} \int_\Omega (\det D^2 u_{k+1})^{1/n} |w|^n\,dx
\\ &=& \frac{[R(u_k)]^{\frac{1}{n}}}{(\lambda[\Omega])^{1/n}} \int_\Omega |u_k| |w|^n\,dx\\
&=&  \int_{\Omega} |u_k| |w|^n\,dx + \frac{[R(u_k)]^{1/n}-(\lambda[\Omega])^{1/n}}{(\lambda[\Omega])^{1/n}} \int_{\Omega} |u_k| |w|^n\,dx.
\end{eqnarray*}
The monotonicity property is thus proved.
\end{proof}
We recall the following monotonicity property in \cite[Lemma 3.1]{AK}.
\begin{lem} (\cite[Lemma 3.1]{AK})
\label{AKR}
 Let $\Omega$ be a bounded convex domain in $\R^n$ with nonempty interior. Let $u_0 \in C(\Omega)$ be a nonzero convex function on $\Omega$ with $0<R(u_0)<\infty$.
Consider the iterative scheme (\ref{IIS}). Then for all $k\geq 0$, we have
$$R(u_{k+1}) \|u_{k+1}\|^n_{L^{n+1}(\Omega)}\leq  R(u_k) \|u_k\|^n_{L^{n+1}(\Omega)}.$$
\end{lem}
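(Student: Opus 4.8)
The plan is to obtain this monotonicity property directly from the defining relation $\det D^2 u_{k+1} = R(u_k)|u_k|^n$ together with a single application of H\"older's inequality; no regularity of the iterates is needed.

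First I would record the bookkeeping that makes every quantity in the statement finite and positive, so that the manipulations below are legitimate. For $k\ge 1$, the function $u_{k}\in\K$ is bounded on $\overline\Omega$ by Aleksandrov's maximum principle (Theorem \ref{Alek_thm}), whence $\|u_{k}\|_{L^{n+1}(\Omega)}<\infty$; since $u_{k}$ is convex with $u_{k}=0$ on $\p\Omega$ it satisfies $u_{k}\le 0$ in $\Omega$, so $|u_{k}|=-u_{k}$; and as $u_{k}\not\equiv 0$, the variational characterization (\ref{lam_def}) gives $R(u_{k})\ge\lambda[\Omega]>0$. For $k=0$ the hypotheses provide $\|u_0\|_{L^{n+1}(\Omega)}<\infty$ and $0<R(u_0)<\infty$. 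Moreover $\int_\Omega|u_k|^n\,dx<\infty$ for every $k\ge 0$ (immediate for $k\ge 1$ since $u_k$ is bounded, and for $k=0$ by H\"older's inequality from $\|u_0\|_{L^{n+1}(\Omega)}<\infty$), so that $\int_\Omega|u_{k+1}|\det D^2u_{k+1}\,dx=R(u_k)\int_\Omega|u_{k+1}|\,|u_k|^n\,dx<\infty$ and $R(u_{k+1})$ is a well-defined finite positive number.

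Now set $a_k:=\int_\Omega|u_k|^{n+1}\,dx=\|u_k\|_{L^{n+1}(\Omega)}^{\,n+1}$, so that $\|u_k\|_{L^{n+1}(\Omega)}^{\,n}=a_k^{n/(n+1)}$ and the asserted inequality is exactly $R(u_{k+1})\,a_{k+1}^{n/(n+1)}\le R(u_k)\,a_k^{n/(n+1)}$. Using the definition (\ref{RQ}) of the Rayleigh quotient applied to $u_{k+1}$ and then substituting $\det D^2u_{k+1}=R(u_k)|u_k|^n$ yields
$$R(u_{k+1})\,a_{k+1}=\int_\Omega|u_{k+1}|\det D^2u_{k+1}\,dx=R(u_k)\int_\Omega|u_{k+1}|\,|u_k|^n\,dx.$$
Applying H\"older's inequality with the conjugate exponents $n+1$ and $\frac{n+1}{n}$ gives
$$\int_\Omega|u_{k+1}|\,|u_k|^n\,dx\le\Big(\int_\Omega|u_{k+1}|^{n+1}\,dx\Big)^{\frac{1}{n+1}}\Big(\int_\Omega|u_k|^{n+1}\,dx\Big)^{\frac{n}{n+1}}=a_{k+1}^{\frac{1}{n+1}}\,a_k^{\frac{n}{n+1}}.$$
Combining the two displays yields $R(u_{k+1})\,a_{k+1}\le R(u_k)\,a_{k+1}^{1/(n+1)}\,a_k^{n/(n+1)}$, and dividing through by $a_{k+1}^{1/(n+1)}>0$ produces $R(u_{k+1})\,a_{k+1}^{n/(n+1)}\le R(u_k)\,a_k^{n/(n+1)}$, which is the claim.

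There is essentially no obstacle here: the argument is soft, and the only point demanding a little care is the preliminary verification in the second paragraph that each $R(u_k)$ and each $\|u_k\|_{L^{n+1}(\Omega)}$ is finite, positive, and nonzero, which is supplied by Aleksandrov's maximum principle, the hypothesis $R(u_0)\in(0,\infty)$, and the bound $R(u_k)\ge\lambda[\Omega]$ for $k\ge 1$. One may note that equality holds precisely when $|u_{k+1}|$ is proportional to $|u_k|$, i.e.\ at a fixed point of the scheme.
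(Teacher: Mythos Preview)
Your proof is correct and is essentially identical to the paper's: both multiply the equation $\det D^2 u_{k+1}=R(u_k)|u_k|^n$ by $|u_{k+1}|$, integrate, apply H\"older with exponents $n+1$ and $\tfrac{n+1}{n}$, and divide by $\|u_{k+1}\|_{L^{n+1}}$. You simply add explicit bookkeeping on finiteness and positivity that the paper leaves implicit.
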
 
Lemma \ref{AKR} was stated and proved in \cite{AK} for $u_0$ satisfying (i), (ii) and (iii) in Theorem \ref{AKthm}. However, the proof in \cite{AK} only used the assumptions $0<R(u_0)<\infty$ and $u_0$ is convex.
We include here the short proof of Lemma \ref{AKR} for reader's convenience.
\begin{proof}[Proof of Lemma \ref{AKR}]
The proof follows by multiplying both sides of the first equation of (\ref{IIS}) by $|u_{k+1}|$, integrating over $\Omega$ and then using the H\"older inequality:
\begin{eqnarray*}R(u_{k+1}) \|u_{k+1}\|^{n+1}_{L^{n+1}(\Omega)}&=&\int_{\Omega} |u_{k+1}|\det D^2 u_{k+1}\,dx\\& =&R(u_k)\int_{\Omega} |u_k|^n |u_{k+1}|\leq  R(u_k) \|u_k\|^n_{L^{n+1}(\Omega)} \|u_{k+1}\|_{L^{n+1}(\Omega)}.
\end{eqnarray*}
Using $u_{k+1}\not \equiv 0$ for all $k\geq 0$, we obtain the claimed monotonicity property.
\end{proof}

\section{Convergence of the iterative scheme}
\label{pf_sec}
In this section, we prove Theorem \ref{IISthm}.
\vglue 0.2cm
Some of our arguments in {\it Step 2} of the proof of Theorem \ref{IISthm} are similar to those in the proof of Theorem \ref{AKthm} in \cite{AK}. However, since we can obtain the convergence of $R(u_k)$ to
$\lambda[\Omega]$ from Lemma \ref{mono_lem}, we can avoid using the continuity property of the energy $\int_{\Omega} |u_k|\det D^2 u_k \,dx$ for a converging sequence of convex functions $u_k$ with an upper bound on the density of the Monge--Amp\`ere measure $\det D^2 u_k$ (see \cite[Lemma 2.9]{AK}). Moreover, the monotone property in Lemma \ref{mono_lem} also allows us to quickly conclude that the whole sequence $u_k$ converges to the same limit.
\begin{proof}[Proof of Theorem \ref{IISthm}]

We fix a nonzero Monge--Amp\`ere eigenfunction $w$. The assumptions on $u_0$ imply that $u_k\not\equiv 0$ for all $k\geq 0$.
The proof is split into several steps.
\vglue 0.1cm
\noindent
{\it Step 1: The whole sequence $R(u_{k})$ converges to $\lambda[\Omega]$.}

Using the monotonicity property established in Lemma \ref{mono_lem}, we find that if $k\geq 3$, then
\begin{equation}
\label{uknot0}
\|u_k\|_{L^{\infty}(\Omega)} \geq \frac{\int_\Omega |u_k| |w|^n\,dx}{\int_\Omega |w|^n\,dx}\geq  \frac{\int_\Omega |u_3| |w|^n\,dx}{\int_\Omega |w|^n\,dx}\geq c(n,\Omega, u_0)>0.
\end{equation}
For each $k\geq 1$, using $R(u_k)\geq \lambda[\Omega]$, we obtain from Lemma \ref{AKR} that
\begin{equation}
\label{ukbound}
\|u_k\|^n_{L^{n+1}(\Omega)}\leq \frac{R(u_0)\|u_0\|^n_{L^{n+1}(\Omega)}}{ R(u_{k})} \leq \frac{R(u_0)\|u_0\|^n_{L^{n+1}(\Omega)}}{ \lambda[\Omega]}<\infty.
\end{equation}
This implies that the increasing sequence $\int_{\Omega}|u_k| |w|^n\,dx$ is bounded from above and thus converges to a limit $L$
\begin{equation}
\label{Llim}
\lim_{k\rightarrow \infty} \int_\Omega |u_k||w|^n\, dx=L\in (0, \infty),
\end{equation}
where we used (\ref{uknot0}) to get $L>0$.

Now, for $k\geq 3$, taking into account  the full monotonicity property in Lemma \ref{mono_lem}, 
we get
\begin{eqnarray}
\label{Ruk}
 [R(u_k)]^{1/n}-(\lambda[\Omega])^{1/n} &\leq& (\lambda[\Omega])^{1/n}\frac{  \int_{\Omega}(|u_{k+1}|- |u_{k}|) |w|^n\,dx} {\int_{\Omega} |u_k| |w|^n\,dx}\nonumber\\
&\leq&  (\lambda[\Omega])^{1/n}\frac{  \int_{\Omega}(|u_{k+1}|- |u_{k}|) |w|^n\,dx} {\int_{\Omega} |u_3| |w|^n\,dx}.
\end{eqnarray}
Letting $k\rightarrow\infty$  in (\ref{Ruk}) and recalling (\ref{Llim}), we conclude that the whole sequence $R(u_{k})$ converges to $\lambda[\Omega]$:
\begin{equation}
\label{Ru_con}
\lim_{k\rightarrow\infty} R(u_k)= \lambda[\Omega].
\end{equation}
\noindent
{\it Step 2: Convergence of $u_k$ to a nontrivial Monge--Amp\`ere eigenfunction $u_{\infty}$ of $\Omega$.}

Next, applying the Aleksandrov estimate in Theorem \ref{Alek_thm} to $u_{k+1}$ where $k\geq 0$, and then using the H\"older inequality together with (\ref{ukbound}), we find
\begin{eqnarray*}
\|u_{k+1}\|^n_{L^{\infty}(\Omega)} \leq C(n,\Omega)\int_{\Omega}\det D^2 u_{k+1}~ dx &=& C(n,\Omega) R(u_k)\int_{\Omega} |u_k|^n~ dx \\& \leq&  C(n,\Omega) R(u_k) 
 \|u_k\|^n_{L^{n+1}(\Omega)}|\Omega|^{\frac{1}{n+1}}\\
& \leq& C(n,\Omega, u_0).
\end{eqnarray*} 
Hence, we obtain the uniform $L^{\infty}$ bound
$$\|u_k\|_{L^{\infty}(\Omega)} \leq C(n,\Omega, u_0)<\infty.$$
From the Aleksandrov estimate, we have the uniform $C^{0, \frac{1}{n}} (\overline{\Omega})$ bound for $u_k$ when $k\geq 1$:
$$\|u_k\|_{C^{0, \frac{1}{n}} (\overline{\Omega})} \leq C(n,\Omega, u_0).$$ 
Therefore, up to extracting a subsequence, we have the following uniform convergence $$u_{k_j}\rightarrow u_{\infty}\not \equiv 0$$
for a convex function $u_{\infty}\in C(\overline{\Omega})$ with $u_{\infty}=0$ on $\p\Omega$ while we also have the uniform convergence 
 $$u_{k_j + 1}\rightarrow w_{\infty}\not\equiv 0$$ 
 for a convex function $w_{\infty}\in C(\overline{\Omega})$ with $w_{\infty}=0$ on $\p\Omega$.
 
Thus, letting $j\rightarrow\infty $ in $$\det D^2 u_{k_{j} +1}=R(u_{k_j})|u_{k_j}|^n,$$ using (\ref{Ru_con}) and the weak convergence of the Monge--Amp\`ere measure (see \cite[Corollary 2.12]{Fi} and \cite[Lemma 5.3.1]{G}), we get
\begin{equation}
\label{uwinfi}
\det D^2 w_{\infty}= \lambda[\Omega] |u_{\infty}|^n.
\end{equation}
Letting $j\rightarrow \infty$ in the following monotonicity property (see Lemma \ref{AKR})
$$ R(u_{k_{j+1}}) \|u_{k_{j+1}}\|^n_{L^{n+1}(\Omega)}\leq R(u_{k_j+1}) \|u_{k_j+1}\|^n_{L^{n+1}(\Omega)}\leq  R(u_{k_j}) \|u_{k_j}\|^n_{L^{n+1}(\Omega)},  $$
and recalling (\ref{Ru_con}), we find that
$$\|w_{\infty}\|_{L^{n+1}(\Omega)} = \|u_{\infty}\|_{L^{n+1}(\Omega)}. $$
However, from (\ref{uwinfi}), we have
\begin{eqnarray*}R(w_\infty) \|w_{\infty}\|^{n+1}_{L^{n+1}(\Omega)}=\int_{\Omega} |w_{\infty}| \det D^2 w_{\infty}\,dx &=&\lambda[\Omega] \int_{\Omega} |u_{\infty}|^n |w_{\infty}| \,dx\\ &\leq& 
\lambda[\Omega]  \|u_{\infty}\|^n_{L^{n+1}(\Omega)}  \|w_{\infty}\|_{L^{n+1}(\Omega)}\\&=& \lambda[\Omega]  \|w_{\infty}\|^{n+1}_{L^{n+1}(\Omega)}.
\end{eqnarray*}
Since $R(w_{\infty})\geq \lambda[\Omega]$,  we must have $R(w_{\infty})= \lambda[\Omega]$, and the inequality above must be an equality,  
but this gives $u_{\infty}= c w_{\infty}$ for some constant $c>0$. Thus, from (\ref{uwinfi}), we have $$\det D^2 w_{\infty} = c^n\lambda[\Omega] |w_\infty|^n.$$ It follows from the uniqueness part of Theorem \ref{ev_thm} that $c=1$ and  $w_\infty= u_{\infty}$ is a Monge--Amp\`ere eigenfunction of $\Omega$.
Passing to the limit in Lemma \ref{mono_lem}, we have
$$\int_\Omega |u_{\infty}| |w|^n \, dx=\lim_{k\rightarrow \infty} \int_\Omega |u_k||w|^n\, dx=L.$$
With this property and the uniqueness up to positive multiplicative constants of the Monge--Amp\`ere eigenfunctions of $\Omega$, we conclude that the limit $u_{\infty}$ does not depend on the subsequence $u_{k_j}$. This shows that the whole sequence $u_k$ converges to a nonzero Monge--Amp\`ere eigenfunction $u_{\infty}$ of $\Omega$. 
\vglue 0.1cm
\noindent
{\it Step 3: Convergence estimate for  $[R(u_k)]^{1/n}-(\lambda[\Omega])^{1/n}$. }

Let $k\geq 3$.
By (\ref{Ruk}), and the fact that $\int_\Omega |u_k| |w|^n\,dx$ increases to $L=\int_{\Omega} |u_{\infty}| |w|^n\,dx$, we have the estimates
\begin{eqnarray*} [R(u_k)]^{1/n}-(\lambda[\Omega])^{1/n} &\leq& (\lambda[\Omega])^{1/n}\frac{  \int_{\Omega}(|u_{k+1}|- |u_{k}|) |w|^n~dx} {\int_{\Omega} |u_3| |w|^n\,dx}\\
&\leq&  (\lambda[\Omega])^{1/n}\frac{  \int_{\Omega}(|u_\infty|- |u_{k}|) |w|^n\,dx} {\int_{\Omega} |u_3| |w|^n\,dx}\\
&\leq& C(n,\Omega, u_0)\int_{\Omega}|u_{\infty}- u_k|\,dx.
\end{eqnarray*}
The last statement of the theorem follows.
\end{proof}
\begin{rem} The inverse iterative scheme \eqref{IIS} was extended in \cite{L_RMI} to the $k$-Hessian eigenvalue problem on smooth, bounded $(k-1)$-convex domains in $\R^n$ where $1\leq k\leq n$. Except for the cases $k=1$  (the Laplace eigenvalue problem) and $k=n$ (the Monge--Amp\`ere eigenvalue problem), many issues are left open when $2\leq k\leq n-1$.
When $2\leq k\leq n-1$, only convergence (with rate) for the $k$-Hessian eigenvalue was proved in \cite{L_RMI}, and as explained in \cite[Remark 5.3]{L_RMI},  it remains an interesting open problem to prove the convergence of the scheme to the $k$-Hessian eigenfunction. 
\end{rem}
\section{Energy characterization of Monge--Amp\`ere eigenfunctions}
\label{EVP_pf}
In this section, we make some remarks on the energy characterization of  the Monge--Amp\`ere eigenfunctions motivated from the  proof of Theorem \ref{IISthm} in Section \ref{pf_sec} and prove Theorem \ref{lamC}.

\medskip
Observe that the Monge--Amp\`ere measure of each $u_{k_j+1}$ has density $R(u_{k_j}) |u_{k_j}|^n$ which is uniformly bounded from above by a positive constant independent of $k$. Thus, using  the continuity property of the energies $\int_{\Omega} |u_{k_j+1}|\det D^2 u_{k_j+1}\,dx$ (see \cite[Lemma 2.9]{AK}), 
we get
$$\lim_{j\rightarrow\infty} \int_{\Omega} |u_{k_j+1}|\det D^2 u_{k_j+1} \,dx= \int_{\Omega} |w_{\infty}|\det D^2 w_{\infty}\,dx$$
so, by (\ref{Ru_con}) 
\begin{equation}
\label{Rlam}
R(w_{\infty})= \lim_{j\rightarrow\infty}  R(u_{k_j +1})= \lambda[\Omega].
\end{equation}
We would like to show that $w_{\infty}$ is a Monge--Amp\`ere eigenfunction of $\Omega$. In the proof of Theorem \ref{IISthm}, we used the monotonicity property of the scheme (\ref{IIS}) given by Lemma \ref{AKR}. 
Finding a direct proof from (\ref{Rlam}) leads us to the following question:
\begin{question}
\label{EVPQ}
Assume that $u\in \K$ satisfies $R(u) =\lambda[\Omega]$. Is $u$  a Monge--Amp\`ere eigenfunction?
\end{question}
It is well known that for a bounded domain in $\Omega\subset\R^n$, if $v\in W^{1, 2}_0(\Omega)\backslash \{0\}$ satisfies
$$\int_{\Omega} |Dv|^2 \,dx =\lambda_1 \int_{\Omega}|v|^2\,dx,$$
where $\lambda_1$ is the first eigenvalue of the Laplace operator with zero boundary condition in $\Omega$, then $v$ is in fact a first eigenfunction of the Laplace operator on $\Omega$.  

\medskip
An affirmative answer to Question \ref{EVPQ} will provide a nonlinear analogue of the above result. 

\medskip
We could not find a direct answer to Question \ref{EVPQ}. As mentioned by one of the reviewers, a positive answer to a complex analogue of Question \ref{EVPQ} concerning the complex
Monge--Amp\`ere eigenvalue problem was given by Badiane--Zeriahi \cite{BZ1, BZ2} using plurisubharmonic envelopes.

\medskip

Interestingly, we can positively answer Question \ref{EVPQ} in Theorem \ref{lamC} by using the convergence result in Theorem \ref{IISthm} and a nonlinear Schwarz inequality.  The rest of this section is devoted to the proof of Theorem \ref{lamC}.

\medskip

In \cite[Theorem 3.1]{V}, 
Verbitsky proved a Monge--Amp\`ere Schwarz inequality for $C^2$ convex functions vanishing at the boundary of smooth and uniformly convex domains.
We will use the following extension, due to Huang \cite[Lemma 4.1]{H}, for convex functions vanishing at the boundary of general bounded convex domains.

\begin{lem}[Monge--Amp\`ere Schwarz inequality]
\label{NCS}
 Let $\Omega$ be a bounded convex domain in $\R^n$ and let $u, v\in C(\overline{\Omega})$ be convex functions on $\Omega$ with $u=v=0$ on $\p\Omega$. Then
\begin{equation*}
\int_\Omega|u| \det D^2 v\, dx \leq \Big(\int_\Omega|u| \det D^2 u\, dx\Big)^{\frac{1}{n+1}} \Big(\int_\Omega|v| \det D^2 v\, dx\Big)^{\frac{n}{n+1}}.
\end{equation*}
\end{lem}

For a convex function $u\in C(\overline{\Omega})$ with $u=0$ on $\p\Omega$, we denote its Monge--Amp\`ere energy by
\[E(u):=\int_\Omega (-u)\det D^2 u\, dx\equiv \int_\Omega |u|\det D^2 u\, dx.\]
\medskip

Using the Monge--Amp\`ere Schwarz inequality, we obtain further monotonicity properties of the scheme \eqref{IIS} when the initial data vanishes on the boundary; see also Lu--Zeriahi \cite[Lemma 5.1]{LZ} and Zeriahi \cite[Lemma 3.4]{Z}
for the complex Monge-Amp\`ere eigenvalue problem.
\begin{lem}
\label{allmono}
Let $\Omega$ be a bounded convex domain in $\R^n$ with nonempty interior. Let $u_0 \in C(\overline{\Omega})$ be a nonzero convex function on $\Omega$ with $u_0=0$ on $\p\Omega$  and $0<R(u_0)<\infty$.
Consider the iterative scheme (\ref{IIS}). Then for all $k\geq 0$, we have
\begin{enumerate}
\item $E(u_k)\leq E(u_{k+1})$;
\item $\|u_k\|_{L^{n+1}(\Omega)} \leq \|u_{k+1}\|_{L^{n+1}(\Omega)}$; and
\item $R(u_{k+1}) \leq R(u_k)$.
\end{enumerate}

\end{lem} 
\begin{proof} Multiplying both sides of the first equation of (\ref{IIS}) by $|u_k|$, integrating over $\Omega$ and then using Lemma \ref{NCS}, we have
\[E(u_k)= R(u_k) \|u_k\|^{n+1}_{L^{n+1}(\Omega)}=\int_{\Omega} |u_k|\det D^2 u_{k+1}\,dx \leq 
[E(u_k)]^{\frac{1}{n+1}} [E(u_{k+1})]^{\frac{n}{n+1}}.
\]
Then, (i) easily follows.  

On the other hand, 
since \[R(u_k) \|u_k\|^n_{L^{n+1}(\Omega)} =  \frac{E(u_k)}{ \|u_k\|_{L^{n+1}(\Omega)}},\]
we can rewrite the monotonicity property of Lemma \ref{AKR} as
\[\frac{E(u_{k+1})}{ \|u_{k+1}\|_{L^{n+1}(\Omega)}} \leq \frac{E(u_k)}{ \|u_k\|_{L^{n+1}(\Omega)}}. \]
This combined with (i) gives (ii).

Finally, we deduce (iii) from (ii) and Lemma \ref{AKR}.
\end{proof}
We are now in a position to prove Theorem \ref{lamC}.
\begin{proof}[Proof of Theorem \ref{lamC}]

Assume $u\in C(\overline{\Omega})$ is a nonzero convex function on $\Omega$ with $u=0$ on $\p\Omega$ that satisfies
\begin{equation} \label{Eeq} \int_\Omega |u|\det D^2 u\, dx =\lambda[\Omega] \int_\Omega |u|^{n+1}\, dx.\end{equation}

Consider the iterative scheme (\ref{IIS}) with initial data $u_0=u$. Then 
\[
R(u_0)=\lambda[\Omega].
\]
Recalling \eqref{lam_def}, we have 
\[R(u_k)\geq \lambda[\Omega]\quad \text{for all } k\geq 0.\]
These combined with the nonincreasing property of $\big\{R(u_k)\big\}_{k=0}^{\infty}$ established in Lemma \ref{allmono} imply that
\[R(u_k)= \lambda[\Omega]\quad \text{for all } k\geq 0.\]
Therefore, we must have an equality in (ii) of Lemma \ref{allmono}
\begin{equation}
\label{kkeq}
\|u_k\|_{L^{n+1}(\Omega)} = \|u_{k+1}\|_{L^{n+1}(\Omega)}\quad\quad \text{for all } k\geq 0,
\end{equation}
and the inequality in Lemma \ref{AKR} must be an equality for all $k\geq 0$.  From its proof using H\"older inequality, we see that this happens if and only if there is a constant $c_k> 0$ such that
\[|u_{k+1}|= c_k |u_k|\quad \text{for all } k\geq 0.\]  From \eqref{kkeq}, we deduce that $c_k=1$ for all $k\geq 0$. It follows that 
\[u_k= u_0=u \quad \text{for all } k\geq 0.\]
By Theorem \ref{IISthm}, 
$\{u_k\}$ converges uniformly on $\overline{\Omega}$ to a nonzero Monge--Amp\`ere eigenfunction of $\Omega$, so 
$u$ must be a nonzero Monge--Amp\`ere eigenfunction of $\Omega$.
The theorem is proved.
\end{proof}

 {\bf Acknowledgements.} I would like to thank Farhan Abedin, Jun Kitagawa and the referees for their critical comments and helpful suggestions 
 that help improve the exposition, and strengthen the results 
 of the note.

 \section*{Declarations}
{\bf Conflict of Interest Statement: } There is no conflict of interest.\\

 {\bf Data Availability Statement:}  Data sharing does not apply to this article as no datasets were generated or analyzed during the current study.

\end{document}